\newtheorem{theorem}{Theorem} [section]
\newtheorem{maintheorem}{Theorem}
\newtheorem{lemma}[theorem]{Lemma}
\newtheorem{proposition}[theorem]{Proposition}
\newtheorem{remark}[theorem]{Remark} 
\DeclareMathOperator*{\intt}{\int}
\DeclareMathOperator{\MAX}{MAX}
\newcommand{\noi}{\noindent}
\newcommand{\Z}{\mathbb{Z}}
\newcommand{\R}{\mathbb{R}}
\newcommand{\T}{\mathbb{T}}
\newcommand{\al}{\alpha}
\newcommand{\dl}{\delta}
\newcommand{\Dl}{\Delta}
\newcommand{\eps}{\varepsilon}
\newcommand{\g}{\gamma}
\newcommand{\ld}{\lambda}
\newcommand{\s}{\sigma}
\newcommand{\ft}{\widehat}
\newcommand{\wt}{\widetilde}
\newcommand{\cj}{\overline}
\newcommand{\dx}{\partial_x}
\newcommand{\LRA}{\Longrightarrow}
\newcommand{\jb}[1]
{\langle #1 \rangle}
\begin{document}

\title
[ Invariance of  white noise for the periodic KdV]
{\bf Invariance of the white noise for KdV}

\author{Tadahiro Oh}

\address{Tadahiro Oh\\
Department of Mathematics\\
University of Toronto\\
40 St. George St, Rm 6290,
Toronto, ON M5S 2E4, Canada}

\email{oh@math.toronto.edu}


\subjclass[2000]{ 35Q53}

\keywords{KdV; well-posedness; bilinear estimate; white noise}

\begin{abstract}
We prove the invariance of the mean 0 white noise for the periodic KdV.
First, we show that the Besov-type space $\ft{b}^s_{p, \infty}$, $sp <-1$, 
contains the support of the white noise.
Then, we prove  local well-posedness in 
$\ft{b}^s_{p, \infty}$ for $p= 2+$, $s = -\frac{1}{2}+$ such that $sp <-1$.
In establishing the local well-posedness, 
we use a variant of the Bourgain spaces with a weight.
This provides an analytical proof of the invariance of the white noise under the flow of KdV obtained in 
Quastel-Valko \cite{QV}.
\end{abstract}

\maketitle


\section{Introduction}

In this paper, we  consider the periodic Korteweg-de Vries (KdV) equation:
\begin{equation} \label{KDV}
\begin{cases}
u_t + u_{xxx} +  u u_x  = 0 \\ 
u \big|_{t = 0} = u_0,
\end{cases}
\end{equation}

\noi 
where $u$ is a real-valued function on $\T\times\R$ with $\T = [0, 2\pi)$ and 
the mean of $u_0$ is 0.
By the conservation of the mean, it follows that the solution $u(t)$ of \eqref{KDV} has the spatial mean 0
for all $t \in\mathbb{R}$ as long as it exists.
Our main goal is to show that the mean 0 white noise  
\begin{equation}\label{white}
d \mu = Z^{-1}\exp(- \tfrac{1}{2} \int u^2 dx) \prod_{x\in \T} d u(x), \ u \text{ mean } 0  
\end{equation}

\noi
is invariant under the flow
and that \eqref{KDV} is globally well-posed almost surely on the statistical ensemble (i.e. on the support of $\mu$)
without using the complete integrability of the equation.

\medskip

First, we briefly review recent well-posedness results of the periodic KdV \eqref{KDV}.
In \cite{BO1}, Bourgain  introduced a new weighted space-time Sobolev space $X^{s, b}$
whose norm is given by
\begin{equation} \label{Xsb}
\| u \|_{X^{s, b}(\mathbb{T} \times \mathbb{R})} = \big\| \jb{n}^s \jb{\tau - n^3}^b 
\ft{u}(n, \tau) \big\|_{L^2_{n, \tau}(\mathbb{Z} \times \R)},
\end{equation}

\noindent
where $\jb{ \: \cdot \:} = 1 + |  \cdot  | $. 
He proved the local well-posedness of \eqref{KDV} in $L^2(\mathbb{T})$
via the fixed point argument, 
immediately yielding the global well-posedness in $L^2(\mathbb{T})$
thanks to the conservation of the $L^2$ norm.
Kenig-Ponce-Vega \cite{KPV4} improved Bourgain's result 
and established the local well-posedness in $H^{-\frac{1}{2}}(\T)$.
Colliander-Keel-Staffilani-Takaoka-Tao \cite{CKSTT4} proved 
the corresponding global well-posedness result via the $I$-method. 
More recently, Kappeler-Topalov \cite{KT} proved the global well-posedness of the KdV in $H^{-1}(\T)$, 
using the complete integrability of the equation. 

There are also results on the necessary conditions on the  regularity 
with respect to smoothness or uniform continuity 
of the solution map $: u_0 \in H^s (\mathbb{T}) \to u(t) \in H^s(\mathbb{T})$.
Bourgain \cite{BO3} showed that if the solution map is $C^3$, 
then $s \geq -\frac{1}{2}$.  
Christ-Colliander-Tao \cite{CCT}
proved that if the solution map is uniformly continuous, 
then $s \geq -\frac{1}{2}$.
(Also, see Kenig-Ponce-Vega \cite{KPV5}.) 

\medskip

In \cite{BO4}, Bourgain proved the invariance of the Gibbs measures for the nonlinear
Schr\"odinger equations (NLS).  
In dealing with the super-cubic nonlinearity,
(where only the local well-posedness result was available),
he used a probabilistic argument and the approximating finite dimensional ODEs
(with the invariant finite dimensional Gibbs measures)
to extend the local solutions to the global ones almost surely on the statistical ensemble
and showed the invariance of the Gibbs measures.
Note that it was crucial that the local well-posedness was obtained with a ``good" estimate
on the solutions (e.g. via the fixed point argument)
for his argument to obtain the uniform convergence of the solutions 
of the finite dimensional ODEs to those of the full PDE.
Also see Burq-Tzvetkov \cite{BT1}, Oh \cite{OH3}, and Tzvetkov \cite{TZ1}, \cite{TZ2}.

In the present paper, we'd like to follow Bourgain's argument \cite{BO4}.
Unfortunately, it is known (c.f. Zhidkov \cite{Z}) that the white noise $\mu$ in \eqref{white}
is supported on $\cap_{s < -\frac{1}{2}} H^s \setminus H^{-\frac{1}{2}}$.
In view of the results in \cite{BO3} and \cite{CCT} described above, 
we can not hope to have a local-in-time solution via the fixed point argument in $H^s$, $s < -\frac{1}{2}$.
Instead, we will prove a local well-posedness in an appropriate Banach space containing the support of the white noise $\mu$.
Define the Besov-type space 
 via the norm
\begin{equation} \label{Besov}
\| f\|_{\ft{b}^s_{p, \infty}} 
:= \| \ft{f}\|_{b^s_{p, \infty}} = \sup_j \| \jb{n}^s \ft{f}(n) \|_{L^p_{|n|\sim 2^j}}
= \sup_j \Big( \sum_{|n| \sim 2^j} \jb{n}^{sp} |\ft{f}(n)|^p \Big)^\frac{1}{p}.
\end{equation}

\noi
By Hausdorff-Young's inequality, 
we have $\ft{b}^s_{p, \infty} \supset B^s_{p', \infty}$ for $p >2$,
where $B^s_{p', \infty}$ is the usual Besov space with $p' = \frac{p}{p-1}$.
In Section 3,  we use the theory of abstract Wiener spaces to show that 
$\ft{b}^s_{p, \infty}$ contains the full support of the white noise for $sp < -1$.

\medskip 

Now, we'd like to establish the local well-posedness in $\ft{b}^s_{p, \infty}$ for $sp < -1$.
Note that this space is essentially less regular than $H^{-\frac{1}{2}}$ since it contains the support of the white noise.
First, define a variant of the $X^{s, b}$ space adjusted to $\ft{b}^s_{p, \infty} (\mathbb{T})$.
Let $X^{s, b}_p$ be the completion of the Schwartz class $\mathcal{S}(\mathbb{T} \times \mathbb{R})$ under the norm
\begin{equation} \label{XSBP}
 \| u \|_{X^{s, b}_p} 
 =  \|\jb{n}^s \jb{\tau - n^3}^b \ft{u}(n, \tau)\|_{b^s_{p, \infty} L^p_\tau}.
\end{equation}

\noi
Then, one of the crucial bilinear estimates that we need to prove is:
\begin{equation} \label{WWbilinear1}
\| \dx(uv) \|_{X^{s, -\frac{1}{2}}_p} \lesssim \| u \|_{X^{s, \frac{1}{2}}_p} \| v \|_{X^{s, \frac{1}{2}}_p}. 
\end{equation}

\noi
As in \cite{BO1} and \cite{KPV4}, a key ingredient is the algebraic identity
$n^3 - n_1^3 - n_2^3 = 3 n n_1 n_2$ for $n = n_1 + n_2$.
However, this is not enough to prove \eqref{WWbilinear1} for $sp < -1$.
In establishing the local well-posedness through the usual integral equation, 
we view the nonlinear problem \eqref{KDV} 
as a perturbation to the Airy equation $u_t + u_{xxx} = 0$.
Noting the Fourier transform of the solution to the Airy equation is
 a measure supported on $\{ \tau =n^3\}$,
we modify $X^{s, b}_p$ with a carefully chosen weight $w(n, \tau)$ in Section 4
to treat the resonant cases in \eqref{WWbilinear1}. 
(c.f. Bejenaru-Tao \cite{BTAO}, Kishimoto \cite{KISHI} in the context of NLS.)

\begin{maintheorem} \label{THM:LWP2}
Assume the mean 0 condition on $u_0$.
Let $s = -\frac{1}{2}+$, $p = 2+$ such that $s p < -1$.
Then, KdV \eqref{KDV} is locally well-posed in $\ft{b}^s_{p, \infty}$.
\end{maintheorem}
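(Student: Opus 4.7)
\medskip

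\noindent\textbf{Proof proposal.} The plan is to run a standard Picard iteration on the Duhamel formulation
\[
u(t) = e^{-t\partial_x^3} u_0 - \tfrac{1}{2}\int_0^t e^{-(t-t')\partial_x^3} \partial_x(u^2)(t')\, dt'
\]
in a weighted version $\wt{X}^{s,b}_p$ of the space $X^{s,b}_p$ defined in \eqref{XSBP}, with $b = \frac{1}{2}$ and with a weight $w(n,\tau) \geq 1$ concentrated near the Airy curve $\{\tau = n^3\}$ which is to be chosen in Section 4. Once the appropriate space is identified, the scheme is the usual one: prove the linear homogeneous and Duhamel estimates
\[
\|e^{-t\partial_x^3} u_0\|_{\wt{X}^{s,1/2}_p \cap C_t \ft{b}^s_{p,\infty}} \lesssim \|u_0\|_{\ft{b}^s_{p,\infty}},
\qquad
\Big\|\int_0^t e^{-(t-t')\partial_x^3} F(t')\, dt'\Big\|_{\wt{X}^{s,1/2}_p} \lesssim \|F\|_{\wt{X}^{s,-1/2}_p},
\]
together with a time-localization gain of a small positive power of $T$, and combine them with the bilinear estimate \eqref{WWbilinear1} (in its weighted version) to run the contraction on a ball in $\wt{X}^{s,1/2}_p([0,T]\times \T)$ for $T$ small depending on $\|u_0\|_{\ft{b}^s_{p,\infty}}$. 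Uniqueness and continuous dependence follow from the same multilinear estimate applied to differences.

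Granting the linear estimates, the entire game is the bilinear estimate. The plan is to set $f(n,\tau) = \jb{n}^s \jb{\tau - n^3}^{1/2} w(n,\tau) \ft{u}(n,\tau)$ and similarly for $g,v$, then by duality reduce \eqref{WWbilinear1} to an $\ell^{p'} L^{p'}$ bound on
\[
\sum_{n=n_1+n_2}\int_{\tau=\tau_1+\tau_2}
\frac{|n|\jb{n}^{-s} \jb{\tau-n^3}^{-1/2} w(n,\tau)^{-1}}
{\prod_{j=1,2}\jb{n_j}^{s}\jb{\tau_j - n_j^3}^{1/2} w(n_j,\tau_j)^{-1}}\,
h(n,\tau) f(n_1,\tau_1) g(n_2,\tau_2),
\]
and dyadically decompose in $|n|,|n_1|,|n_2|$ and in the three modulations $L_0 = \jb{\tau-n^3}$, $L_1 = \jb{\tau_1-n_1^3}$, $L_2 = \jb{\tau_2-n_2^3}$. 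The driving input is the resonance identity
\[
\tau - n^3 - (\tau_1 - n_1^3) - (\tau_2 - n_2^3) = -(n^3 - n_1^3 - n_2^3) = -3 n n_1 n_2
\]
on $n = n_1 + n_2$, $\tau = \tau_1 + \tau_2$, so that $\max(L_0,L_1,L_2) \gtrsim |n n_1 n_2|$. In the \emph{high-low} regime $|n|\sim|n_1|\gg|n_2|$ this forces one modulation to be at least of size $|n|^2 |n_2|$, which (after applying Cauchy--Schwarz in $\tau$ and a Young/H\"older step in $n$) supplies enough powers of $|n|$ to cancel the derivative loss and to close the $\ell^p$ sum, using that $sp<-1$ controls low-frequency pile-ups on the $n_2$ side. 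The \emph{high-high} regime $|n_1|\sim|n_2|\gg|n|$ yields even more gain from $|n_1 n_2|$.

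The step I expect to be genuinely hard is the \emph{resonant} regime in which $|n|$ is comparable to $|n_1|$ while $|n_2|$ is bounded (or vice versa), and all three modulations are comparable to $\jb{n^2 n_2}$; here the naive count loses exactly the borderline amount and forces the restriction $s \geq -\frac{1}{2}$ and $sp \geq -1$, which is incompatible with supporting the white noise. This is precisely where the weight $w(n,\tau)$ enters: one chooses $w(n,\tau)$ to be slightly larger than $1$ on a small neighborhood of $\{\tau = n^3\}$ (of width calibrated to $|n|^{-\alpha}$ for a small $\alpha>0$) so that on the output side the factor $w(n,\tau)^{-1}$ provides a genuine gain, while on the input side the factor $w(n_j,\tau_j)^{-1}$ is essentially $1$ off-resonance and only costs an unbounded factor on a set so thin that the $L^p_\tau$ integration absorbs it using $\frac{1}{2} - \frac{1}{p} > 0$. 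I would dedicate Section 4 to pinning down $w$ and verifying, by direct case-by-case summation, that with this weight the resonant bilinear terms close with a small positive power of $T$ to spare; the non-resonant cases go through by the standard arguments of \cite{BO1}, \cite{KPV4} adapted to the $b^s_{p,\infty}L^p_\tau$ norm via H\"older and a Young-type convolution inequality in $n$ that exploits $sp<-1$. Once the weighted bilinear estimate is in hand, the contraction mapping argument in $\wt{X}^{s,1/2}_p$ is entirely routine and yields Theorem~\ref{THM:LWP2}.
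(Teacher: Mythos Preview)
Your high-level scheme (contraction in a weighted $X^{s,1/2}_p$-type space, with the weight handling the borderline interaction) is the paper's scheme, but the design of the weight in your proposal is backwards and, as written, would not close. With your own substitution $f = \jb{n}^s\jb{\tau-n^3}^{1/2}w\,\ft u$, the kernel carries $w(n,\tau)$ on the \emph{output} and $w(n_j,\tau_j)^{-1}$ on each \emph{input} --- your displayed kernel has these inverted. Since $w\geq 1$, the input factors $w(n_j,\tau_j)^{-1}$ can only \emph{help}; they never ``cost an unbounded factor''. Conversely the output factor $w(n,\tau)$ can only hurt, so a weight that is $>1$ in a neighborhood of $\{\tau=n^3\}$ would already break the homogeneous estimate $\|\eta(t)S(t)u_0\|\lesssim\|u_0\|_{\ft b^s_{p,\infty}}$, since the free solution lives exactly there. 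In the paper one has $w(n,\tau)\lesssim\jb{\tau-n^3}^{0+}$, hence $w\sim 1$ near the Airy curve.

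The genuinely dangerous case is also not ``all three modulations comparable to $\jb{n n_1 n_2}$'' (that case has a full $\MAX^{-3/2}$ to spend and is easy). It is the dual case $\MAX=\jb{\tau_j-n_j^3}$ for an \emph{input} $j$, with the secondary quantity $\jb{\tau_j-n_j^3-3nn_1n_2}$ tiny; here Lemma~\ref{LEM:GTV} gives no decay and the $L^{p/(p-2)}_n$ sum just barely diverges when $sp<-1$. The paper's weight \eqref{weight} is therefore supported not near $\tau=n^3$ but on the countably many resonant sheets $A_k=\{\jb{\tau-n^3+3n(n-k)k}\ll\jb{n}^{1/100}\}$, with amplitude $\min(\jb{k},\jb{n-k})^\dl$. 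Evaluated at the large-modulation input $(n_j,\tau_j)$, this gives $w(n_j,\tau_j)^{-1}\lesssim\min(\jb{n},\jb{n_{3-j}})^{-\dl}$ in precisely that bad subcase (Subcases~(2.a) and~(3.a), second part), which is the missing power you need to sum. Finally, at $b=\tfrac12$ you will not get the $C_t\ft b^s_{p,\infty}$ embedding from the $X$-part alone; the paper augments the norm by a $Y^{s,0}_p=b^0_{p,\infty}L^1_\tau$ piece (see \eqref{WBourgain}) and closes the corresponding $L^1_\tau$ Duhamel estimate via Lemma~\ref{LEM:closetocurve}.
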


Once we prove Theorem \ref{THM:LWP2}, 
we can use the finite dimensional approximation to \eqref{KDV}: 
\begin{equation} \label{KDVN}
\begin{cases}
u^N_t + u^N_{xxx} + \mathbb{P}_N (u^N u^N_x)  = 0 \\ 
u^N \big|_{t = 0} = u^N_0,
\end{cases}
\end{equation}

\noi
where $\mathbb{P_N}$ is the  projection onto the frequencies $|n| \leq N $
and $u^N = \mathbb{P}_N u$.
Note that \eqref{KDVN} is Hamiltonian, 
and that it preserves $\int (u^N)^2 dx$.
Hence, by Liouville's theorem, the finite dimensional white noise 
\begin{equation}\label{whiteN}
d \mu_N = Z_N^{-1}\exp(- \tfrac{1}{2} \int (u^N)^2 dx) \prod_{x\in \T} d u^N(x) 
\end{equation}
is invariant under the flow of \eqref{KDVN}.
The remaining argument follows just as in 
\cite{BO4}, \cite{BT1}, \cite{OH3}, \cite{TZ1}, \cite{TZ2},
and we obtain the a.s. GWP of \eqref{KDV} and the invariance of the white noise $\mu$.

\begin{maintheorem} \label{THM:GWP2}
Let $\{ g_n (\omega) \}_{n = 1}^\infty$ be a sequence of i.i.d. standard  complex Gaussian
random variables on a probability space $(\Omega, \mathcal{F}, P)$. 
Consider \eqref{KDV} with initial data 
$ u_0 = \sum_{n \ne 0} g_n (\omega) e^{inx},$
where $g_{-n} = \cj{g_n}$.
Then, \eqref{KDV} is globally well-posed almost surely in $\omega \in \Omega.$
Moreover, the mean 0 white noise $\mu$ is invariant under the flow. 
\end{maintheorem}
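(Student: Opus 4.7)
The plan is to implement Bourgain's invariant-measure scheme as in \cite{BO4, BT1, OH3, TZ1, TZ2}, with Theorem \ref{THM:LWP2} and the support statement of Section 3 as the two main inputs. Fix $s, p$ as in Theorem \ref{THM:LWP2}, so that $u_0(\omega) = \sum_{n \neq 0} g_n(\omega) e^{inx}$ lies in $\ft{b}^s_{p,\infty}$ almost surely and its law is the mean-zero white noise $\mu$. The truncated equation \eqref{KDVN} is Hamiltonian and preserves $\int (u^N)^2 dx$, so Liouville's theorem gives invariance of $\mu_N$ under the truncated flow $\Phi^N(t)$. Alongside Theorem \ref{THM:LWP2} I would establish a uniform-in-$N$ local well-posedness for \eqref{KDVN} in $\ft{b}^s_{p, \infty}$, with existence time $\tau = \tau(\|u_0\|_{\ft{b}^s_{p,\infty}})$ independent of $N$, together with $u^N \to u$ in $X^{s,\frac{1}{2}}_p$ on any common interval of existence; both statements should be automatic from the bilinear estimate \eqref{WWbilinear1} and the uniform boundedness of the sharp projection $\mathbb{P}_N$ on the relevant $X^{s,b}_p$ spaces.

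Next I would build a globalization set. For $A, T > 0$ put
\[
\Sigma_N(T, A) = \bigcap_{k = -\lfloor T/\tau(A) \rfloor}^{\lfloor T/\tau(A) \rfloor} \Phi^N(-k\tau(A))\bigl(\{ \|u\|_{\ft{b}^s_{p,\infty}} \leq A \}\bigr).
\]
Invariance of $\mu_N$ then bounds $\mu_N(\Sigma_N(T,A)^c)$ by $(2T/\tau(A) + 1)\, \mu_N(\{ \|u_0\|_{\ft{b}^s_{p,\infty}} > A \})$, and a Borell/Fernique-type Gaussian concentration for $\|\cdot\|_{\ft{b}^s_{p,\infty}}$, natural because this norm is a supremum over dyadic blocks of $L^p$ norms of independent Gaussians, gives exponential decay in $A$. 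Choosing $A = A(T, \eps)$ of order $(\log(T/\eps))^{1/q}$ and diagonalizing over $T_j \nearrow \infty$ and $\eps_j \searrow 0$, combined with the approximation step, produces a set $\Sigma \subset \ft{b}^s_{p,\infty}$ of full $\mu$-measure on which the KdV solution is globally defined with at most polylogarithmic-in-$t$ growth in the $\ft{b}^s_{p,\infty}$ norm. Invariance of $\mu$ under the limiting flow $\Phi(t)$ then follows by passing to the limit: for bounded continuous $F$ on $\ft{b}^s_{p,\infty}$,
\[
\int F(\Phi(t) u_0)\, d\mu(u_0) = \lim_{N \to \infty} \int F(\Phi^N(t) u_0)\, d\mu_N(u_0) = \lim_{N \to \infty} \int F\, d\mu_N = \int F\, d\mu,
\]
using the $X^{s,\frac{1}{2}}_p$-convergence $u^N \to u$, the weak convergence $\mu_N \to \mu$, and the invariance of $\mu_N$.

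The main obstacle is the uniform-in-$N$ approximation $\|u - u^N\|_{X^{s,\frac{1}{2}}_p} \to 0$ on a common local interval: this is where the contractive fixed-point structure behind Theorem \ref{THM:LWP2} is essential, since $u - u^N$ must be realized as a fixed point of a linear contraction whose forcing splits into a high-frequency piece governed by the decay of $(I - \mathbb{P}_N) u_0$ in $\ft{b}^s_{p,\infty}$ and a low-frequency bilinear piece handled by \eqref{WWbilinear1} with one factor being $u - u^N$. A secondary but unavoidable ingredient is the Gaussian tail estimate for $\|u_0\|_{\ft{b}^s_{p,\infty}}$ used above, which should follow from the same abstract Wiener space framework that is used in Section 3 to place the white noise inside $\ft{b}^s_{p,\infty}$.
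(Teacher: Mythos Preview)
Your proposal is correct and follows precisely the Bourgain scheme the paper invokes; indeed the paper itself omits all details here, simply stating that Fernique's tail bound $\mu(\|\phi\|_{\ft{b}^s_{p,\infty}} \geq K) \leq e^{-cK^2}$ combined with Theorem~\ref{THM:LWP2} allows one to ``follow the argument in \cite{BO4}'' (with references to \cite{BT1, OH3, TZ1, TZ2}), so your sketch actually supplies more than the paper does. The only small adjustment is that the local theory and the approximation step should be phrased in the weighted space $W^{s,\frac{1}{2}}_p$ of Section~4 rather than the unweighted $X^{s,\frac{1}{2}}_p$, since it is Proposition~\ref{PROP:Wbilinear1} (not \eqref{WWbilinear1}) that closes the contraction.
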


\begin{remark} \label{REM:KdVwhite} \rm
This provides an analytical proof of the invariance of the white noise $\mu$.
Recently, Quastel-Valko \cite{QV} proved  the invariance of the white noise under the flow of KdV.
Their argument  combines the  GWP in $H^{-1}(\T)$ via the complete integrability (Kappeler-Topalov \cite{KT}), 
the correspondence between the white noise for KdV and the Gibbs measure (weighted Wiener measure) of mKdV 
under the (corrected) Miura transform (Cambronero-McKean \cite{CM}),
and the invariance of the Gibbs measure of mKdV (Bourgain \cite{BO4}.)
Their method is not applicable to the general non-integrable coupled KdV system
considered in \cite{OH3}, whereas our argument is applicable in the non-integrable case as well.
\end{remark}

\begin{remark}
Let $\mathcal{F} L^{s, p}$ be the space of functions on $\mathbb{T}$ defined via the norm
$\|f\|_{\mathcal{F} L^{s, p}}=\| \jb{n}^s \ft{f}(n) \|_{L^p_n}$.
Then, Theorems \ref{THM:LWP2} and \ref{THM:GWP2} can also be established in 
$\mathcal{F} L^{s, p}$
for some $s = -\frac{1}{2}+$, $p = 2+$ with $s p < -1$.
See Remark \ref{REM:FLP}.
\end{remark}

This paper is organized as follows:
In Section 2, we introduce some standard notations.
In Section 3, we go over the basic theory of Gaussian Hilbert spaces and abstract Wiener spaces.
Then, we give the precise mathematical meaning to the white noise $\mu$
and show that it is a (countably additive) probability measure on $\ft{b}^s_{p, \infty}$ for $sp < -1$.
In Section 4, we introduce the function spaces and linear estimates.
Then, we prove Theorem \ref{THM:LWP2} by establishing the crucial bilinear estimate.

\section{Notation}

In the periodic setting on $\T$, the spatial Fourier domain is $\Z$.
Let $dn$ be the normalized counting measure on $\Z$, 
and we say $f \in L^p(\Z)$, $1 \leq p < \infty$, if
\[ \| f \|_{L^p(\mathbb{Z})} = \bigg( \int_{\mathbb{Z}} |f(n)|^p dn \bigg)^\frac{1}{p}  
:= \bigg( \frac{1}{2\pi} \sum_{n \in \mathbb{Z}} |f(n)|^p \bigg)^\frac{1}{p} < \infty.\]

\noindent
If $ p = \infty$, we have the obvious definition involving the essential supremum.
We often drop $2\pi$ for simplicity.
If a function depends on both $x$ and $t$, we use ${}^{\wedge_x}$ 
(and ${}^{\wedge_t}$) to denote the spatial (and temporal) Fourier transform, respectively.
However, when there is no confusion, we simply use ${}^\wedge$ to denote the spatial Fourier transform,
the temporal Fourier transform, and  the space-time Fourier transform, depending on the context.

Given a space $X$ of functions on $\mathbb{T} \times \mathbb{R}$, 
we define the local in time restriction $X(\T \times I )$ for any time interval $I = [t_1, t_2]\subset \mathbb{R}$,
(or simply $X_{[t_1, t_2]}$) by
\[ \|u\|_{X_I} = \|u \|_{X(\T \times I )} = \inf \big\{ \|\wt{u} \|_{X(\T \times \mathbb{R})}: {\wt{u}|_I = u}\big\}.\]

\noindent
For a Banach space $X \subset \mathcal{S}'(\mathbb{T} \times \mathbb{R})$, 
we use $\ft{X}$ to denote the space of the Fourier transforms of the functions in $X$,
which is a Banach space with the norm $\|f\|_{\ft{X}} = \| \mathcal{F}^{-1}_{n, \tau} f\|_{X}$,
where $\mathcal{F}^{-1}$ denotes the inverse Fourier transform (in $n$ and $\tau$.)
Also, for a space $Y$ of functions on $\mathbb{Z}$, 
we use $\ft{Y}$ to denote the space of the inverse Fourier transforms of the functions in $Y$
 with the norm $\|f\|_{\ft{Y}} = \| \mathcal{F} f\|_{Y}$.

Now, define $\ft{b}^s_{p, q}(\mathbb{T})$ by the norm 
\begin{align} \label{Besov2}
\| f\|_{\ft{b}^s_{p, q}(\mathbb{T})} 
= \| \ft{f}\|_{b^s_{p, q}(\mathbb{Z})} 
: = \big\| \| \jb{n}^s \ft{f}(n) \|_{L^p_{|n|\sim 2^j}} \big\|_{l^q_j} 
= \Big( \sum_{j = 0}^\infty\Big( \sum_{|n| \sim 2^j} \jb{n}^{sp} |\ft{f}(n)|^p 
 \Big)^\frac{q}{p} \Big)^\frac{1}{q}
\end{align}

\noi 
for $ q < \infty$ and by \eqref{Besov} when $q = \infty$.

Lastly,
let $\eta \in C^\infty_c(\mathbb{R})$ be a smooth cutoff function supported on $[-1, 1]$ with $\eta \equiv 1$ 
on $[-\frac{1}{2}, \frac{1}{2}]$
and let $\eta_{_T}(t) =\eta(T^{-1}t)$. 
We use $c,$ $ C$ to denote various constants, usually depending only on $s$ and $p $.
If a constant depends on other quantities, we will make it explicit.
We use $A\lesssim B$ to denote an estimate of the form $A\leq CB$.
Similarly, we use $A\sim B$ to denote $A\lesssim B$ and $B\lesssim A$
and use $A\ll B$ when there is no general constant $C$ such that $B \leq CA$.
We also use $a+$ (and $a-$) to denote $a + \eps$ (and $a - \eps$), respectively,  
for arbitrarily small $\eps \ll 1$.

\section{Gaussian Measures in Hilbert Spaces and Abstract Wiener Spaces}

In this section, we go over the basic theory of Gaussian measures in Hilbert spaces
and abstract Wiener spaces
to provide the precise meaning of 
the white noise 
``$d \mu = Z^{-1}\exp(-\frac{1}{2} \int u^2 dx) \prod_{x\in \T} d u(x)$"
appearing in Section 1.
For details, see  Zhidokov \cite{Z}, Gross \cite{GROSS}, and Kuo \cite{KUO}.

First, recall  (centered) Gaussian measures in $\mathbb{R}^n$.
Let $n \in \mathbb{N}$ and $B$ be a symmetric positive $n \times n$ matrix
with real entries.
The  Borel measure $\mu$ in $\mathbb{R}^n$ with the density
\[ d \mu(x) = \frac{1}{\sqrt{(2\pi)^n \det (B )}} \exp \big( -\tfrac{1}{2} \langle B^{-1} x, x \rangle_{\mathbb{R}^n} \big)\]

\noindent
is called a (nondegenerate centered) Gaussian measure in $\mathbb{R}^n$.
Note that $\mu(\mathbb{R}^n) = 1$.

Now, we consider the analogous definition of the infinite dimensional (centered) Gaussian measures.
Let $H$ be a real separable Hilbert space and $B: H \to H$ be a linear positive self-adjoint operator 
(generally not bounded) with eigenvalues $\{\ld_n\}_{n\in \mathbb{N}}$
and the corresponding eigenvectors $\{e_n\}_{n\in\mathbb{N}}$  forming an orthonormal basis of $H$.
We call a set $M \subset H$  cylindrical if there exists an integer $n\geq 1$ and a Borel set $F \subset \mathbb{R}^n$
such that
\begin{equation} \label{CYLINDER}
 M = \big\{ x \in H : ( \jb{ x, e_1}_H, \cdots, \jb{ x, e_n}_H ) \in F \big\}. 
\end{equation}

\noindent
For a fixed operator $B$ as above, we denote by $\mathcal{A}$ the set of all cylindrical subsets of $H$.
One can easily verify that $\mathcal{A}$ is a field.
Then, the centered Gaussian measure in $H$ with the correlation operator $B$ is defined as 
the additive (but not countably additive in general) measure $\mu$ defined on the field $\mathcal{A}$
via
\begin{equation} \label{CGAUSSIAN}
 \mu(M) = (2\pi)^{-\frac{n}{2}} \prod_{j = 1}^n \ld_j^{-\frac{1}{2}} \int_F e^{-\frac{1}{2}\sum_{j = 1}^n \ld_j^{-1} x_j^2 }d x_1 \cdots dx_n,
\text{ for }M \in \mathcal{A} \text{ as in \eqref{CYLINDER}. }
\end{equation}

The following theorem tells us when  this Gaussian measure $\mu$ is countably additive.

\begin{theorem} \label{COUNTABLEADD}
The  Gaussian measure $\mu$ defined in \eqref{CGAUSSIAN} is countably additive
on the field $\mathcal{A}$ if and only if $B$ is an operator of trace class, 
i.e. $\sum_{n = 1}^\infty \ld_n < \infty$.
If the latter holds, then
the minimal $\s$-field $\mathcal{M}$ containing the field $\mathcal{A}$ of all cylindrical sets is the Borel $\s$-field on $H$.
\end{theorem}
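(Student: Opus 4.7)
The plan is to exploit the spectral decomposition of $B$. Writing $\xi_n := \jb{x, e_n}_H$, the definition \eqref{CGAUSSIAN} makes the coordinates $\{\xi_n\}$ behave, under $\mu$, as independent centered Gaussians of variance $\ld_n$. Countable additivity on $\mathcal{A}$ is then equivalent to realizing this coordinate sequence as the expansion of a genuine $H$-valued random element, which reduces both directions to a statement about almost-sure convergence of $\sum \sqrt{\ld_n}\,g_n\, e_n$ in $H$.

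\noi
\emph{Sufficiency.} Assume $\sum_{n}\ld_n < \infty$. On an auxiliary probability space I would take i.i.d.\ standard real Gaussians $\{g_n\}$ and set $X := \sum_{n\geq 1} \sqrt{\ld_n}\, g_n\, e_n$. Since
\[
E\,\Big\|\textstyle\sum_{n=N}^M \sqrt{\ld_n}\, g_n\, e_n\Big\|_H^2 = \sum_{n=N}^M \ld_n \longrightarrow 0,
\]
the partial sums are Cauchy in $L^2(\Omega;H)$, and by It\^o--Nisio (or a direct martingale argument applied to $\nm{X_N}_H^2$) convergence also holds $P$-a.s.\ in $H$. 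Let $\wt\mu$ be the law of $X$, a Borel probability measure on $H$. Independence of the $g_n$ implies that the push-forward of $\wt\mu$ under the continuous map $x \mapsto (\xi_1,\dots,\xi_n)$ has the Gaussian density appearing in \eqref{CGAUSSIAN}, so $\wt\mu$ agrees with $\mu$ on $\mathcal{A}$ and thus $\mu$ is countably additive.

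\noi
\emph{Necessity.} Now assume $\sum_n \ld_n = \infty$ and suppose, toward a contradiction, that $\mu$ is countably additive on $\mathcal{A}$; extend it to the generated $\s$-field $\mathcal{M}$. For $N,R\in\mathbb{N}$ the cylindrical set $E_{N,R} := \{x \in H : \sum_{n=1}^N \xi_n^2 \leq R\}$ satisfies
\[
\mu(E_{N,R}) = P\Big(\textstyle\sum_{n=1}^N \ld_n\, g_n^2 \leq R\Big) \leq e^{tR}\prod_{n=1}^N (1+2t\ld_n)^{-\frac{1}{2}}
\]
for every $t>0$, by the Chernoff bound together with $E\,e^{-t\ld_n g_n^2}=(1+2t\ld_n)^{-1/2}$. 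The divergence $\sum_n \ld_n = \infty$ forces $\sum_n \log(1+2t\ld_n) = \infty$ (split into the regime $\ld_n \leq 1$, where $\log(1+x)\geq x/2$, and $\ld_n > 1$, where each term is bounded below by a positive constant), so the product vanishes as $N\to\infty$ and $\mu(E_{N,R}) \to 0$. By Parseval, $\{x:\nm{x}_H^2\leq R\} = \bigcap_N E_{N,R} \in \mathcal{M}$, so continuity of measure gives $\mu(\{\nm{x}_H \leq\sqrt R\}) = 0$ for every $R$; writing $H$ as the countable union of such balls yields $\mu(H) = 0$, contradicting $\mu(H)=1$ (obtained by taking $M = H$ in \eqref{CGAUSSIAN}).

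\noi
\emph{Identification of $\mathcal{M}$.} In the trace-class case, every cylindrical set is Borel because the map $x\mapsto(\xi_1,\dots,\xi_n)$ is continuous, so $\mathcal{M}\subseteq \text{Borel}(H)$. Conversely, separability of $H$ reduces matters to showing that every open ball lies in $\mathcal{M}$, and the ball $\{x:\nm{x-y}_H<r\}$ can be written as
\[
\bigcup_{k\geq 1}\bigcap_{N\geq 1}\Big\{x:\textstyle\sum_{n=1}^N(\xi_n-\jb{y,e_n}_H)^2 \leq r^2 - \tfrac{1}{k}\Big\} \in \mathcal{M}.
\]
The main technical obstacle is the necessity half, where one must establish the concrete quantitative fact that a cylindrical Gaussian with non-summable variances assigns zero mass to every bounded ball of $H$; the Laplace-transform estimate above is precisely the ingredient that supplies this.
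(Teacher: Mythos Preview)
The paper does not actually supply a proof of this theorem: it is quoted as a standard fact from the references \cite{Z}, \cite{GROSS}, \cite{KUO}, so there is nothing in the paper to compare against at the level of argument. Your proof is correct and is essentially the classical one found in those references. The sufficiency direction via the Karhunen--Lo\`eve-type series $X=\sum_n \sqrt{\ld_n}\,g_n\,e_n$ and the identification of $\mathcal{M}$ with the Borel $\s$-field by writing balls as countable combinations of cylinders are exactly as in \cite{KUO}. For the necessity direction your Laplace/Chernoff estimate is a clean way to show that the cylindrical Gaussian with $\sum_n \ld_n=\infty$ assigns zero mass to every norm ball; one small remark is that after assuming countable additivity on $\mathcal{A}$ you invoke the Carath\'eodory extension to $\mathcal{M}$ before applying continuity from above---this is legitimate (and indeed necessary, since $\{\|x\|_H\le\sqrt{R}\}\notin\mathcal{A}$), but it is worth making the use of the extension theorem explicit.
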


Consider a sequence of the finite dimensional Gaussian measures $\{\mu_n\}_{n\in\mathbb{N}}$ as follows.
For fixed $n \in \mathbb{N}$, let $\mathcal{M}_n$ be the set of all cylindrical sets in $H$ of the form \eqref{CYLINDER} with this fixed $n$
and arbitrary Borel sets $F\subset \mathbb{R}^n$.
Clearly, $\mathcal{M}_n$ is a $\s$-field, and setting 
\[ \mu_n(M) = (2\pi)^{-\frac{n}{2}} \prod_{j = 1}^n \ld_j^{-\frac{1}{2}} \int_F e^{-\frac{1}{2}\sum_{j = 1}^n \ld_j^{-1} x_j^2 }d x_1 \cdots dx_n\]

\noindent
for $M \in \mathcal{M}_n$, we obtain a countably additive measure $\mu_n$ defined on $\mathcal{M}_n$.
Then, one can show that each measure $\mu_n$ can be naturally extended onto the whole Borel $\s$-field $\mathcal{M}$ of $H$
by $ \mu_n(A) := \mu_n(A \cap \text{span}\{e_1, \cdots, e_n\})$
for $A \in \mathcal{M}$.
Then, we have

\begin{proposition} \label{PROP:Zhidkov2}
Let $\mu$ in  \eqref{CGAUSSIAN} be countably additive.
Then,  $\{\mu_n\}_{n\in \mathbb{N}}$ constructed above converges weakly to $\mu$ as $n \to \infty$.

\end{proposition}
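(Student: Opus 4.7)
The plan is to realize $\mu$ and the $\mu_n$ simultaneously as laws of random elements of $H$ on a common probability space for which convergence in probability holds; this upgrades to weak convergence of the laws via the mapping theorem for bounded continuous functions.

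Concretely, I would fix an i.i.d.\ sequence $\{g_j\}_{j \geq 1}$ of standard real Gaussians on some $(\Omega, \mathcal{F}, P)$ and set
\[
X_n(\omega) := \sum_{j = 1}^n \sqrt{\lambda_j}\, g_j(\omega)\, e_j.
\]
The coordinates $\langle X_n, e_j \rangle_H = \sqrt{\lambda_j}\, g_j$ ($j \leq n$) are independent centered Gaussians with variances $\lambda_j$, so the law of $X_n$ on $H$ is precisely $\mu_n$ as defined above. The orthonormality of the $e_j$ and independence of the $g_j$ give, for $n > m$,
\[
\mathbb{E}\, \| X_n - X_m \|_H^2 = \sum_{j = m+1}^n \lambda_j,
\]
and since Theorem \ref{COUNTABLEADD} guarantees $\sum_j \lambda_j < \infty$ under the countable-additivity hypothesis on $\mu$, the sequence $\{X_n\}$ is Cauchy in $L^2(\Omega; H)$. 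Hence $X_n \to X$ in $L^2$, and in particular in probability, for some $X \in L^2(\Omega; H)$. Once $\mathrm{Law}(X) = \mu$ is established, the conclusion follows easily: for any bounded continuous $f : H \to \mathbb{R}$, continuity of $f$ gives $f(X_n) \to f(X)$ in probability, and boundedness together with dominated convergence yields
\[
\int_H f\, d\mu_n = \mathbb{E}[f(X_n)] \to \mathbb{E}[f(X)] = \int_H f\, d\mu,
\]
which is the definition of $\mu_n \to \mu$ weakly.

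The main obstacle is the identification of $\mathrm{Law}(X)$ with $\mu$. I would verify it first on the generating cylindrical field $\mathcal{A}$: for a cylindrical set $M$ of the form \eqref{CYLINDER}, the projection $(\langle X, e_1 \rangle, \ldots, \langle X, e_n \rangle)$ equals $(\sqrt{\lambda_1}\, g_1, \ldots, \sqrt{\lambda_n}\, g_n)$ almost surely, whose joint density is exactly the integrand appearing in \eqref{CGAUSSIAN}. Hence $\mathrm{Law}(X)(M) = \mu(M)$ for every $M \in \mathcal{A}$. Because both measures are countably additive on $\mathcal{A}$ (by the hypothesis combined with Theorem \ref{COUNTABLEADD}), they agree on the generated Borel $\sigma$-field $\mathcal{M}$ by the uniqueness part of Carath\'eodory's extension theorem. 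This cylindrical-to-Borel uniqueness step is really the only delicate point; everything else reduces to a routine Gaussian computation and the standard fact that convergence in probability implies convergence in distribution.
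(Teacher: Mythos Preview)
Your argument is correct. The Karhunen--Lo\`eve-type coupling $X_n = \sum_{j=1}^n \sqrt{\lambda_j}\,g_j\,e_j$ realizes $\mu_n$ as $\mathrm{Law}(X_n)$, the trace-class condition $\sum_j \lambda_j < \infty$ (equivalent to countable additivity by Theorem~\ref{COUNTABLEADD}) makes $\{X_n\}$ Cauchy in $L^2(\Omega;H)$, and the cylindrical identification together with the $\pi$--$\lambda$ (or Carath\'eodory uniqueness) argument pins down $\mathrm{Law}(X) = \mu$. Convergence in probability then gives convergence in distribution, which is exactly weak convergence of $\mu_n$ to $\mu$. One minor remark: for the last step you do not really need dominated convergence---the implication ``convergence in probability $\Rightarrow$ convergence in distribution'' is a standard fact and suffices directly.

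As for comparison: the paper does \emph{not} supply its own proof of Proposition~\ref{PROP:Zhidkov2}. It is stated there as a background result drawn from Zhidkov~\cite{Z}, so there is no in-paper argument to contrast with yours. Your coupling approach is the standard and most transparent way to establish this fact.
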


Now,  we construct the mean 0 white noise.
Let $\phi = \sum_{n} a_n e^{inx}$ be a real-valued function on $\mathbb{T}$ with mean 0. 
i.e. we have $a_0 = 0$ and $a_{-n} = \cj{a_n}$.
First, define $\mu_N$ on $\mathbb{C}^{N} \cong \mathbb{R}^{2N}$
with the  density
\begin{equation}\label{WhiteN}
 d \mu_N = Z_N^{-1} e^{-\frac{1}{2} \sum_{n = 1}^N  |a_n|^2 } \textstyle \prod_{ n = 1 }^N d a_n , 
\end{equation}

\noindent
where
$Z_{N} = \int_{\mathbb{C}^{N}} 
e^{-\frac{1}{2} \sum_{ n = 1 }^N  |a_n|^2} \prod_{ n = 1 }^N  d a_n . $
Note that this measure is the induced probability measure on $\mathbb{C}^{N}$ under the map
$ \omega \mapsto \{  g_n(\omega)  \}_{n = 1}^N,$
\noindent
where $g_n(\omega)$, $n = 1, \cdots, N$,  are i.i.d. standard complex Gaussian random variables.
Next, define the white noise $\mu$ by
\begin{equation} \label{White}
 d \mu = Z^{-1} e^{-\frac{1}{2} \sum_{  n \geq 1 }  |a_n|^2} \textstyle \prod_{  n \geq 1 } d  a_n , 
\end{equation}

\noindent
where
$Z = \int e^{-\frac{1}{2} \sum_{   n \geq 1 }  |a_n|^2}  \prod_{  n \geq 1 } d a_n. $
Then, in the above correspondence, we have 
$\phi = \sum_{n \ne 0} g_n e^{inx}$, 
where $\{g_n(\omega)\}_{n \geq 1}$ are i.i.d. standard complex Gaussian random variables
and $g_{-n} = \cj{g_n}$.

Let $\dot{H}^s_0$ be the homogeneous Sobolev space restricted to the {\it real-valued} mean 0 elements.
Let $\jb{\cdot, \cdot}_{\dot{H}^s_0}$ denote the inner product in $\dot{H}^s_0$.
i.e. $ \big\langle \sum c_n e^{inx}, \sum d_n e^{inx} \big\rangle_{\dot{H}_0^s} 
= \sum_{n \geq 1} |n|^{2s} c_n \cj{d_n} $.
Let $B_s = \sqrt{-\Dl}\vphantom{|}^{2s}$.
Then, the weighted exponentials $\{|n|^{-s} e^{inx}\}_{n\ne 0}$ are the eigenvectors of $B_s$ with the eigenvalue $|n|^{2s}$,
forming an orthonormal basis of $\dot{H}^s_0$.
Note that
\[ -\tfrac{1}{2}\jb{B^{-1} \phi, \phi}_{\dot{H_0^s}} 
= -\tfrac{1}{2}\Big\langle \sum_{n \ne 0} |n|^{-2s} a_n e^{inx}, \sum_{n \ne 0} a_n e^{inx} \Big\rangle_{\dot{H}_0^s} 
= -\tfrac{1}{2}\sum_{n \geq 1}  |a_n|^2.\] 

\noindent
The right hand side is exactly the expression appearing in the exponent in \eqref{White}.
By Theorem \ref{COUNTABLEADD}, $\mu$ is  countably additive
if and only if $B$ is  of trace class, i.e.
$ \sum_{n \ne 0} |n|^{2s} < \infty$.
Hence, $\bigcap_{s < -\frac{1}{2}} H^s$ is a natural space to work on.
Unfortunately, the results in \cite{BO3} and \cite{CCT} state that 
one can not have a local-in-time solution of \eqref{KDV} via the fixed point argument in $H^s$, $s < -\frac{1}{2}$.
Instead, we propose to work on $\ft{b}^s_{p, \infty} (\mathbb{T})$ defined in \eqref{Besov} for $sp < -1$
in view of Theorem \ref{THM:LWP2}.
Since $\ft{b}^s_{p, \infty}$ is not a Hilbert space, 
we need to go over  the basic theory of abstract Wiener spaces.

Recall the following definitions \cite{KUO}:
Given  a real separable Hilbert space $H$ with norm $\|\cdot \|$, 
let $\mathcal{F} $ denote the set of finite dimensional orthogonal projections $\mathbb{P}$ of $H$.
Then, define a cylinder set $E$ by  $E = \{ x \in H: \mathbb{P}x \in F\}$ where $\mathbb{P} \in \mathcal{F}$ 
and $F$ is a Borel subset of $\mathbb{P}H$,
and let $\mathcal{R} $ denote the collection of such cylinder sets.
Note that $\mathcal{R}$ is a field but not a $\s$-field.
Then, the Gauss measure $\mu$ on $H$ is defined 
by 
\[ \mu(E) = (2\pi)^{-\frac{n}{2}} \int_F e^{-\frac{\|x\|^2}{2}} dx  \]

\noindent
for $E \in \mathcal{R}$, where
$n = \text{dim} \mathbb{P} H$ and  
$dx$ is the Lebesgue measure on $\mathbb{P}H$.
It is known that $\mu$ is finitely additive but not countably additive in $\mathcal{R}$.

A seminorm $|||\cdot|||$ in $H$ is called measurable if for every $\eps>0$, 
there exists $\mathbb{P}_0 \in \mathcal{F}$ such that 
\[ \mu( ||| \mathbb{P} x ||| > \eps  )< \eps \]

\noindent
for $\mathbb{P} \in \mathcal{F}$ orthogonal to $\mathbb{P}_0$.
Any measurable seminorm  is weaker  than the norm of $H$,
and $H$ is not complete with respect to $|||\cdot|||$ unless $H$ is finite dimensional.
Let $B$ be the completion of $H$ with respect to $|||\cdot|||$
and denote by $i$ the inclusion map of $H$ into $B$.
The triple $(i, H, B)$ is called an abstract Wiener space.

Now, regarding $y \in B^\ast$ as an element of $H^\ast \equiv H$ by restriction,
we embed $B^\ast $ in $H$.
Define the extension of $\mu$ onto $B$ (which we still denote by $\mu$)
as follows.
For a Borel set $F \subset \R^n$, set
\[ \mu ( \{x \in B: ((x, y_1), \cdots, (x, y_n) )\in F\})
:= \mu ( \{x \in H: (\jb{x, y_1}_H, \cdots, \jb{x, y_n}_H )\in F\}),\]

\noindent
where $y_j$'s are in $B^\ast$ and $(\cdot , \cdot )$ denote the natural pairing between $B$ and $B^\ast$.
Let $\mathcal{R}_B$ denote the collection of cylinder sets
$ \{x \in B: ((x, y_1), \cdots, (x, y_n) )\in F \}$
in $B$.

\begin{theorem}[Gross \cite{GROSS}]
$\mu $ is countably additive in the $\s$-field generated by $\mathcal{R}_B$.
\end{theorem}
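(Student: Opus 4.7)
The plan is to verify countable additivity of $\mu$ on the field $\mathcal{R}_B$ by showing that $\mu$ is continuous at the empty set; finite additivity together with continuity at $\emptyset$ then yields countable additivity on $\mathcal{R}_B$, and Carath\'eodory extension delivers the desired Borel probability measure on $\sigma(\mathcal{R}_B)$. The main analytical input is the measurability of $|||\cdot|||$, which I would use to produce compact subsets of $B$ of $\mu$-measure arbitrarily close to $1$; once tightness is available, the rest is a standard compactness argument.

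Given $\delta > 0$, I would apply the measurability of $|||\cdot|||$ repeatedly to choose an increasing chain $\mathbb{P}_1 \subset \mathbb{P}_2 \subset \cdots$ in $\mathcal{F}$ and a summable sequence $\eps_k \downarrow 0$ such that, for every finite-dimensional $\mathbb{P} \in \mathcal{F}$ orthogonal to $\mathbb{P}_k$,
\[ \mu\bigl(\bigl\{ x : |||\mathbb{P} x ||| > \eps_k \bigr\}\bigr) < 2^{-k-1}\delta. \]
Writing $\Delta_k = \mathbb{P}_{k+1} - \mathbb{P}_k$ and considering
\[ K_\delta = \bigl\{ x \in B : |||\Delta_k x ||| \leq \eps_k \text{ for every } k \geq 1 \bigr\}, \]
a Bolzano--Weierstrass-type argument (using that each $\mathbb{P}_k H$ is finite-dimensional and that $\sum_k \eps_k < \infty$ forces the partial sums $\mathbb{P}_n x = \mathbb{P}_1 x + \sum_{k=1}^{n-1}\Delta_k x$ to be Cauchy in $|||\cdot|||$) shows that the closure of $K_\delta$ is compact in $B$; a union bound against the tail estimate above then gives $\mu(K_\delta) > 1 - \delta$.

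Continuity at $\emptyset$ follows from a standard compactness argument: if $E_n \in \mathcal{R}_B$ with $E_n \downarrow \emptyset$ but $\mu(E_n) \geq \delta$ for all $n$, then, using the inner regularity of the finite-dimensional Gaussian marginals, one approximates each $E_n$ from within by a closed cylinder set $\widetilde{E}_n \subset E_n$ of $\mu$-measure at least $\delta - 2^{-n-2}\delta$, whereupon $\widetilde{E}_n \cap \overline{K_{\delta/4}}$ is a decreasing sequence of nonempty closed subsets of the compact set $\overline{K_{\delta/4}}$, whose intersection must be nonempty, contradicting $\bigcap_n E_n = \emptyset$. I expect the main obstacle to be bookkeeping rather than any deep estimate: one must carefully translate between $H$-cylinder sets (on which $\mu$ is defined via the Hilbert pairing $\langle \cdot, \cdot\rangle_H$) and $B$-cylinder sets (defined via the pairing between $B$ and $B^*$), using the embedding $B^* \hookrightarrow H$ to check that these pairings agree on the relevant cylindrical events, so that the tail bound above, a priori a statement about $\mu$ on $H$-cylinder sets, applies to $K_\delta \subset B$ as claimed and the two notions of cylinder set measure are consistent under the extension.
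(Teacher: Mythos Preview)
The paper does not prove this theorem at all: it is stated with a citation to Gross \cite{GROSS} and then invoked as a black box (the next sentence reads ``In the present context, let $H = L^2(\mathbb{T})$\ldots''). There is therefore nothing in the paper to compare your argument against.

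Your sketch is the classical one, essentially the proof in Gross's paper and in Kuo's lecture notes \cite{KUO}: exploit measurability of $|||\cdot|||$ to manufacture a nested family of finite-rank projections with summable tail bounds, build from these a set $K_\delta$ whose closure is compact in $B$, and then run the usual ``decreasing closed subsets of a compact set have nonempty intersection'' contradiction to get continuity at $\emptyset$. Two small points are worth tightening. First, as written your $K_\delta$ places no constraint on $\mathbb{P}_1 x$, so a sequence in $K_\delta$ need not have bounded $\mathbb{P}_1$-component and the Bolzano--Weierstrass step does not start; you should also impose $|||\mathbb{P}_1 x||| \leq R$ for some $R$ with $\mu(|||\mathbb{P}_1 x||| > R) < \delta/4$, which is available since $\mathbb{P}_1 H$ is finite-dimensional. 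Second, $K_\delta$ is a countable intersection of cylinder sets and hence not in $\mathcal{R}_B$, so the statement ``$\mu(K_\delta) > 1 - \delta$'' is not literally meaningful before the extension is built; the clean way around this is to work with the finite truncations $K_\delta^{(N)} = \{|||\Delta_k x||| \leq \eps_k,\ k \leq N\} \in \mathcal{R}_B$, which do satisfy $\mu(K_\delta^{(N)}) > 1-\delta$ by the finite union bound, and intersect $\widetilde{E}_n$ with $K_\delta^{(N_n)}$ for a suitable $N_n$ in the diagonal argument. You already flag the $H$-versus-$B$ cylinder-set translation as the main bookkeeping hurdle, and that is correct; with these two adjustments your outline matches the standard proof.
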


\noi
In the present context, let $H= L^2(\mathbb{T})$ and
$B=\ft{b}^s_{p, \infty} (\mathbb{T})$ for $sp < -1$. 
Then, we have 

\begin{proposition} \label{PROP:meas}
The seminorms  $\|\cdot\|_{\ft{b}^s_{p, \infty}}$ is measurable for $sp < -1$.

\end{proposition}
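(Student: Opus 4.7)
The plan is to work directly from Gross's definition of a measurable seminorm. Fix $\eps > 0$; I would take $\mathbb{P}_0$ to be the orthogonal projection of $H = L^2(\mathbb{T})$ onto the finite-dimensional subspace spanned by $\{e^{inx}: |n| \leq N_0\}$, with $N_0$ to be chosen large at the end of the argument. It then suffices to estimate $\mu(\|\mathbb{P} x\|_{\ft{b}^s_{p, \infty}} > \eps)$ uniformly over all finite-rank orthogonal projections $\mathbb{P}$ with $\mathbb{P} \perp \mathbb{P}_0$.

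The key observation is that on any such $\mathbb{P}$ the cylinder-set Gauss measure $\mu$ realizes $\mathbb{P} x$ as $\sum_{i=1}^{k} \xi_i v_i$, where $\{v_i\}$ is an ONB of $\mathbb{P} H$ and the $\xi_i$ are i.i.d.\ standard Gaussians. Hence each Fourier mode $\ft{(\mathbb{P} x)}(n) = \jb{x, \mathbb{P} e^{inx}}$ is Gaussian with variance
$\sum_i |\ft{v_i}(n)|^2 = \|\mathbb{P} e^{inx}\|_{L^2}^2 \leq \|e^{inx}\|_{L^2}^2 \lesssim 1.$
Moreover, since the range of $\mathbb{P}$ is orthogonal to every $e^{inx}$ with $|n|\leq N_0$, we have $\mathbb{P} e^{inx} = 0$, whence $\ft{(\mathbb{P} x)}(n) = 0$ almost surely on those modes. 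Combined with the standard Gaussian moment bound $\mathbb{E} |\ft{(\mathbb{P} x)}(n)|^q \lesssim q^{q/2}$, this provides uniform-in-$\mathbb{P}$ control on every Fourier coefficient.

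The rest is a routine moment estimate that exploits $sp < -1$. Setting $X_j := (\sum_{|n|\sim 2^j}\jb{n}^{sp}|\ft{(\mathbb{P} x)}(n)|^p)^{1/p}$, Minkowski's inequality in $L^{q/p}$ for any $q \geq p$ yields
\begin{align*}
\mathbb{E}[X_j^q]^{p/q} \leq \sum_{|n|\sim 2^j} \jb{n}^{sp}\,\mathbb{E}[|\ft{(\mathbb{P} x)}(n)|^q]^{p/q} \lesssim q^{p/2}\,2^{j(sp+1)}
\end{align*}
whenever $2^j > N_0$, with $X_j \equiv 0$ otherwise. Since $\|\mathbb{P} x\|_{\ft{b}^s_{p, \infty}} = \sup_j X_j$, a union bound over dyadic scales followed by Chebyshev's inequality yields
\begin{align*}
\mu(\|\mathbb{P} x\|_{\ft{b}^s_{p, \infty}} > \eps) \leq \sum_{j:\, 2^j > N_0} \eps^{-q}\,\mathbb{E}[X_j^q] \lesssim \eps^{-q}\,q^{q/2}\,N_0^{(sp+1)q/p}.
\end{align*}
Because $sp < -1$ makes $(sp+1)/p$ strictly negative, fixing $q$ (e.g.\ $q = p$) and choosing $N_0$ large forces the right-hand side below $\eps$.

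The main obstacle is the subtlety that $\mathbb{P}$ may be an arbitrary finite-rank orthogonal projection living in the high-frequency subspace---not merely a frequency truncation. This is overcome by the \emph{uniform} variance bound $\|\mathbb{P} e^{inx}\|_{L^2}^2 \leq \|e^{inx}\|_{L^2}^2$, which holds for every orthogonal projection $\mathbb{P}$ regardless of its particular shape. Once this uniformity is in place, the dyadic decomposition and the decisive gain from $sp < -1$ complete the argument; note that the same computation, applied to $\mathbb{P} \equiv I$ on a genuine white-noise realization, is exactly what verifies that $\mu$ is supported on $\ft{b}^s_{p,\infty}$, which is the companion statement announced in the introduction.
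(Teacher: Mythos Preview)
Your argument is correct and takes a genuinely different route from the paper's.  The paper first asserts (without detailed justification) that it suffices to control only the specific frequency truncation $\mathbb{P}_{>M_0}$, then writes $\phi=\sum_{n\ne0}g_ne^{inx}$ with i.i.d.\ Gaussians, invokes an auxiliary lemma on the ratio $\max_{|n|\sim M}|g_n|^2\big/\sum_{|n|\sim M}|g_n|^2$, passes to a good set via Egoroff, interpolates the $L^p$ block norm between $L^2$ and $L^\infty$, and finishes with a direct polar--coordinate tail computation yielding Gaussian decay $e^{-cR_j^2}$ on each block.

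Your proof bypasses all of this machinery: the single observation that $\ft{(\mathbb{P}x)}(n)$ is Gaussian with variance $\|\mathbb{P}e^{inx}\|_{L^2}^2\le\|e^{inx}\|_{L^2}^2$, valid for \emph{any} finite--rank orthogonal projection, feeds directly into a moment bound via Minkowski in $L^{q/p}$, and Chebyshev plus the dyadic sum $\sum_{2^j>N_0}2^{j(sp+1)q/p}$ closes the argument.  This is more elementary and, as you note, cleanly handles the subtlety that Gross's definition quantifies over \emph{all} $\mathbb{P}\perp\mathbb{P}_0$, not only frequency cutoffs; the paper's reduction to $\mathbb{P}_{>M_0}$ leaves that step to the reader.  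What the paper's longer computation buys is an explicit Gaussian tail $e^{-cK^2}$, but that is in any case supplied afterwards by Fernique's theorem once the abstract Wiener space $(i,L^2,\ft{b}^s_{p,\infty})$ is established, so your weaker Chebyshev bound is entirely sufficient for the proposition itself.
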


\noindent
Hence, $(i, H, B) = (i, L^2, \ft{b}^s_{p, \infty}) $ is an abstract Wiener space, 
and $\mu$ defined in \eqref{White} is countably additive in $\ft{b}^s_{p, \infty}$.
We present the proof of Proposition \ref{PROP:meas} at the end of this section.
It seems that the statement in Proposition \ref{PROP:meas} holds true for $sp = -1$
(c.f. Roynette \cite{ROY} for $p = 2$.)
However, we can choose $s$ and $p$ such that $sp < -1$ for our application,
and thus we will not discuss the endpoint case. 
It follows from the proof that $(i, L^2, \mathcal{F} L^{s, p}) $, where $ \mathcal{F}L^{s, p} = \ft{b}^s_{p, p}$,
is also an abstract Wiener space for $sp < -1$ (we need  a strict inequality in this case.)

Given an abstract Wiener space $(i, H, B)$, we have the following integrability result due to Fernique \cite{FER}.

\begin{theorem} [Theorem 3.1 in \cite{KUO}]
\label{THM:FER} 
Let $(i, H, B)$ be an abstract Wiener space.
Then, there exists $ c > 0$ such that $ \int_B e^{c \|x\|_B^2} \mu(d x) < \infty$.
Hence, there exists $ c' > 0$ such that $\mu ( \|x\|_B > K) \leq e^{-c'K^2}$.
\end{theorem}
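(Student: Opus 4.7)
The plan is to carry out Fernique's original symmetrization argument, which exploits only the invariance of the Gaussian measure on $B$ under the rotation $(x,y) \mapsto \big((x+y)/\sqrt{2},\, (x-y)/\sqrt{2}\big)$ on $B \times B$, together with the triangle inequality for $\|\cdot\|_B$. The second statement will then follow from the first as a routine consequence, so the core task is to produce a Gaussian tail bound $\mu(\|x\|_B > K) \leq C e^{-c' K^2}$ for $K$ large.

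First, I would let $X, Y$ be independent $B$-valued random variables with common law $\mu$, and set $U = (X-Y)/\sqrt{2}$, $V = (X+Y)/\sqrt{2}$. The key preliminary step is to observe that $U, V$ are again independent with law $\mu$: this is built in at the level of cylinder sets, where $\mu \otimes \mu$ is a standard Gaussian in each finite-dimensional projection and hence orthogonally invariant, and it extends to $B \times B$ by Gross's countable additivity result already cited. From $X = (V+U)/\sqrt{2}$ and $Y = (V-U)/\sqrt{2}$, the triangle inequality applied to $2U = (U+V) - (V-U) \cdot(-1) \cdots$ (more cleanly, to $\sqrt{2}\, X$ and $\sqrt{2}\, Y$) yields that on the event $\{\|X\|_B \leq s, \ \|Y\|_B > t\}$ with $0 < s < t$, one must have both $\|U\|_B > (t-s)/\sqrt{2}$ and $\|V\|_B > (t-s)/\sqrt{2}$. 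Combined with independence, this gives the master inequality
\begin{equation*}
\mu\bigl(\|x\|_B \leq s\bigr)\, \mu\bigl(\|x\|_B > t\bigr) \;\leq\; \mu\Bigl(\|x\|_B > \tfrac{t-s}{\sqrt{2}}\Bigr)^{\!2}.
\end{equation*}

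Next, I would bootstrap this inequality. Writing $\phi(t) = \mu(\|x\|_B > t)$, I choose $s_0 > 0$ such that $\alpha := 1 - \phi(s_0) > 1/2$, which is possible because $\mu$ is a probability measure and $\|x\|_B < \infty$ $\mu$-a.s., so $\phi(s_0) \to 0$ as $s_0 \to \infty$. Define $t_0 = s_0$ and $t_{n+1} = s_0 + \sqrt{2}\, t_n$, so that $(t_{n+1} - s_0)/\sqrt{2} = t_n$ and $t_n$ grows geometrically like $(\sqrt{2})^{n}$. The master inequality with $s = s_0, t = t_{n+1}$ becomes $\phi(t_{n+1}) \leq \alpha^{-1} \phi(t_n)^2$, and iterating produces
\begin{equation*}
\phi(t_n) \;\leq\; \alpha \cdot \bigl(\phi(s_0)/\alpha\bigr)^{2^n}.
\end{equation*}
Since $\phi(s_0)/\alpha < 1$ by the choice of $s_0$, and $t_n^2 \sim 2^{n+1} s_0^2 / (\sqrt 2 - 1)^2$, interpolating in $n$ and using that $\phi$ is decreasing gives $\mu(\|x\|_B > K) \leq C e^{-c' K^2}$ for all sufficiently large $K$, hence (adjusting $C$) for all $K \geq 0$.

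Finally, the exponential integrability follows by the layer-cake identity
\begin{equation*}
\int_B e^{c\|x\|_B^2}\, \mu(dx) \;=\; 1 + 2c \int_0^\infty t\, e^{c t^2}\, \mu\bigl(\|x\|_B > t\bigr)\, dt,
\end{equation*}
which is finite for any $c < c'$. The only place where care is required is in the opening step: verifying that the rotation of $B \times B$ actually preserves $\mu \otimes \mu$, since $B$ is typically not a Hilbert space and the rotation is defined only through the linear structure of $B$. I would handle this by testing against finite families of dual elements $y_1, \ldots, y_n \in B^*$: on the cylinder $\sigma$-algebra the invariance reduces to the rotational symmetry of a finite-dimensional Gaussian on $H \times H$ (restricted through the inclusion $B^* \hookrightarrow H^* \equiv H$), and this then propagates to $\mathcal{R}_B$ and to its generated $\sigma$-field. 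This is the one conceptual point in the argument; the rest is the triangle-inequality/iteration scheme sketched above.
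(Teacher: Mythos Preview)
Your argument is correct and is exactly Fernique's classical symmetrization proof. The paper does not supply its own proof of this theorem; it is merely quoted from Kuo \cite{KUO} (and attributed to Fernique \cite{FER}) as a known result used to obtain the Gaussian tail estimate $\mu(\|\phi\|_{\ft{b}^s_{p,\infty}} \geq K) \leq e^{-cK^2}$ needed in the globalization argument. So there is nothing to compare against, and what you have written is precisely the standard proof one would find in the cited reference.
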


\noi
In our context, if $sp < -1$, we have 
$ \mu \big(\|\phi\|_{\ft{b}^s_{p, \infty}(\mathbb{T})} \geq K, \phi \ \text{mean } 0 )  \leq e^{-c K^2}$ for some $c>0$.
With this estimate and Theorem \ref{THM:LWP2}, 
we can follow the argument in \cite{BO4} to prove Theorem \ref{THM:GWP2}.
We omit the details.
Also, see \cite{BT1}, \cite{OH3}, \cite{TZ1}, \cite{TZ2} for the details.

\begin{proof}[Proof of Proposition \ref{PROP:meas}]
We present the proof only for $2 < p < \infty$, which is the relevant case for our application.
We just point out that the proof for $p \leq 2$ is similar but simpler
(where one can use H\"older inequality in place of Lemma \ref{CL:decay} below.)
For $p = \infty$, see \cite{BO4}, \cite{BO5}, \cite{OH3}.

It suffices to show that
for given $\eps> 0$, 
there exists large $M_0$ such that 
\begin{equation*} 
 \mu \big(\|\mathbb{P}_{>M_0}\phi\|_{\ft{b}^s_{p, \infty}} > \eps) < \eps,
\end{equation*}

\noi 
where $\mathbb{P}_{>M_0}$is the projection onto the frequencies $|n| > M_0$.
In the following, 
write $\phi = \sum_{n\ne 0} g_n e^{inx}$, 
where $\{ g_n(\omega) \}_{n = 1}^\infty$ is a sequence of i.i.d. standard complex-valued Gaussian random variables
and $g_{-n} = \cj{g_n}$.
First, recall the following lemma.
\begin{lemma}[Lemma 4.7 in \cite{OHSBO}] \label{CL:decay}
Let $\{g_n\}$ be a sequence of i.i.d standard complex-valued Gaussian random variables.
Then, for $M$ dyadic and $\dl > 0$, we have
\[ \lim_{M\to \infty} M^{1-\dl} \frac{\max_{|n|\sim M } |g_n|^2}{ \sum_{|n|\sim M} |g_n|^2} = 0, \text{ a.s.} \]
\end{lemma}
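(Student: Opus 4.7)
The plan is to apply Borel--Cantelli along the dyadic sequence $M = 2^k$, after decomposing the event $\{M^{1-\dl} X_M / S_M > \eps\}$ into a lower-tail event for $S_M := \sum_{|n|\sim M}|g_n|^2$ and an upper-tail event for $X_M := \max_{|n|\sim M}|g_n|^2$. The heuristic is that for $g_n$ a standard complex Gaussian, $|g_n|^2$ is exponentially distributed with mean $1$, so $S_M$ concentrates near $N_M \sim M$ (the number of indices with $|n|\sim M$) while $X_M \sim \log M$. The ratio is therefore of order $M^{-1}\log M$, which absorbs the factor $M^{1-\dl}$ with enormous room to spare for every $\dl > 0$.

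Concretely, for fixed $\dl, \eps > 0$ I would start from the inclusion
\[ \Big\{M^{1-\dl}\frac{X_M}{S_M} > \eps\Big\} \subset \Big\{S_M \leq c N_M\Big\} \cup \Big\{X_M \geq c' \eps M^\dl\Big\}, \]
valid for small absolute constants $c, c' > 0$ because on the complement of the first event we have $M^{1-\dl}X_M/S_M \lesssim M^{-\dl}X_M$. The lower-tail probability $P(S_M \leq c N_M)$ decays exponentially in $N_M \sim M$ by a standard Chernoff/Cram\'er bound for sums of i.i.d.\ exponential random variables, while the upper-tail probability is controlled by a union bound $P(X_M \geq c'\eps M^\dl) \leq N_M \, P(|g_1|^2 \geq c' \eps M^\dl) \lesssim M \, e^{-c'\eps M^\dl}$, using the exact tail $P(|g_n|^2 > t) = e^{-t}$. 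Both bounds are manifestly summable over dyadic $M = 2^k$.

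Borel--Cantelli then yields, for each fixed $(\dl, \eps)$, that almost surely $M^{1-\dl}X_M/S_M \leq \eps$ for all but finitely many dyadic $M$. Intersecting over the countable family $\eps = 1/k$, $k \in \mathbb{N}$, and (using the trivial monotonicity in $\dl$) over a countable dense subset of $\dl \in (0, \infty)$, one recovers the claimed a.s.\ limit for every $\dl > 0$.

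There is no serious obstacle: the argument reduces to two textbook tail estimates for the exponential distribution together with a Borel--Cantelli/countable-intersection wrap-up. The only point requiring a moment's care is that the threshold $c'\eps M^\dl$ in the union bound be large enough for super-polynomial decay, which is true for every $\dl > 0$---precisely the range of $\dl$ stated in the lemma.
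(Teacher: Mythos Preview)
Your argument is correct. The paper does not actually prove this lemma; it merely quotes it as Lemma~4.7 from \cite{OHSBO}, so there is no in-paper proof to compare against. Your Borel--Cantelli approach via the decomposition into a Chernoff lower-tail bound for $S_M$ and a union-bound upper-tail for $X_M$ is the standard self-contained route and goes through without difficulty. One small remark: the final step of intersecting over a countable dense set of $\dl$'s is unnecessary, since the lemma is a statement for each fixed $\dl>0$; establishing the a.s.\ limit for one $\dl$ at a time already suffices.
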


Fix $K> 1$ and $\dl \in ( 0, \frac{1}{2})$ (to be chosen later.)
Then, by Lemma \ref{CL:decay} and Egoroff's theorem, 
there exists a set $E$ such that $\mu (E^c) < \frac{1}{2}\eps$
and the convergence in Lemma \ref{CL:decay} is uniform on $E$.
i.e. we can choose dyadic $M_0$ large enough such that 
\begin{equation} \label{A:decay}
\frac{\| \{g_n (\omega) \}_{|n| \sim M} \|_{L^{\infty}_n} }
{\| \{g_n (\omega) \}_{|n| \sim M} \|_{L^{2}_n} }
 \leq M^{-\dl}, 
\end{equation}

\noindent 
for  all $\omega \in E$  and dyadic $M > M_0$.
In the following, we will work only on $E$ and  drop `$\cap E$' for notational simplicity. 
However, it should be understood that all the events are under the intersection with $E$ 
so that  \eqref{A:decay} holds.

Let $\{\s_j \}_{j \geq 1}$ be a sequence of positive numbers such that $\sum \s_j = 1$,
and let $ M_j = M_0 2^j$ dyadic.
Note that $\s_j = C 2^{-\ld j} =C M_0^\ld M_j^{-\ld}$ for some small $\ld > 0$
(to be determined later.)
Then,  we have
\begin{align} \label{A:subadd} 
\mu \big(\|\mathbb{P}_{>M_0}\phi\|_{\ft{b}^s_{p, \infty}} > \eps)
& \leq \mu  \big( \| \{g_n \}_{|n| > M_0} \|_{{b}^s_{p, 1}} > \eps \big) \notag \\
& \leq \sum_{j = 0}^\infty \mu \big( \| \{\jb{n}^s g_n \}_{|n| \sim M_j} \|_{L_n^{p}}  > \s_j \eps \big),
\end{align}

\noindent
where ${b}^s_{p, 1}$ is defined in \eqref{Besov2}.
By interpolation and \eqref{A:decay}, we have  
\begin{align*}
\| \{ & \jb{n}^s g_n \}_{|n| \sim M_j} \|_{L_n^{p}} 
\sim M_j^{s} \| \{ g_n \}_{|n| \sim M_j} \|_{L_n^{p}} 
\leq  M_j^{s} \| \{ g_n \}_{|n| \sim M_j} \|_{L_n^{2}}^\frac{2}{p}
  \| \{ g_n \}_{|n| \sim M_j} \|_{L_n^{\infty}}^\frac{p-2}{p} \\
& \leq  M_j^{s} \| \{ g_n \}_{|n| \sim M} \|_{L_n^{2}}
\Bigg(\frac{  \| \{ g_n \}_{|n| \sim M_j} \|_{L_n^{\infty}} }
{\| \{ g_n \}_{|n| \sim M_j} \|_{L_n^{2}}} \Bigg)^\frac{p-2}{p}
\leq  M_j^{s -\dl \frac{p-2}{p}} \| \{ g_n \}_{|n| \sim M_j} \|_{L_n^{2}}
\end{align*}

\noindent
a. s. 
Thus, if we have $\|\{\jb{n}^s g_n \}_{|n| \sim M_j} \|_{L_n^{p}}  > \s_j \eps$,
 then we have
 $\| \{ g_n \}_{|n| \sim M_j} \|_{L_n^{2}} 
 \gtrsim R_j $
 where $R_j := \s_j \eps M_j^{-s+\dl \frac{p-2}{p}} $.
With $p = 2 + 2\theta$, we have 
$-s+\dl \frac{p-2}{p} = \frac{-sp + 2 \dl \theta}{2 + 2 \theta} > \frac{1}{2}$
by taking $\dl$ sufficiently close to $\frac{1}{2}$ since $-sp > 1$.
Then, by taking $\ld > 0$ sufficiently small,
$R_j = \s_j \eps M_j^{-s+\dl \frac{p-2}{p}} 
= C \eps M_0^\ld M_j^{-s+\dl \frac{p-2}{p}-\ld}
\gtrsim C \eps M_0^{\ld} M_j^{\frac{1}{2}+} $. 
By a direct computation in the polar coordinates, 
we have 
\begin{align*} 
\mu\big(  \| \{ g_n \}_{|n| \sim M_j} \|_{L_n^{2}}  \gtrsim R_j \big)
 \sim \int_{B^c(0, R_j)} e^{-\frac{1}{2}|g|^2} \prod_{|n| \sim M_j} dg_n
 \lesssim \int_{R_j}^\infty e^{-\frac{1}{2}r^2}  r^{2 \cdot \# \{|n| \sim M_j\} -1} dr.
\end{align*}

\noindent 
Note that the implicit constant in the inequality is 
$\s(S^{2 \cdot \# \{|n| \sim M_j\} -1})$, a surface measure of the 
$2 \cdot \# \{|n| \sim M_j\} -1$ dimensional unit sphere.
We drop it since $\s(S^n) = 2 \pi^\frac{n}{2} / \Gamma (\frac{n}{2}) \lesssim 1$.
By change of variable $t = M_j^{-\frac{1}{2}}r$, we have
$r^{2 \cdot \# \{|n| \sim M_j\} -2} \lesssim r^{4M_j} \sim M_j^{2M_j} t^{4M_j}.$
Since $t > M_j^{-\frac{1}{2}} R_j = C \eps M_0^{\ld} M_j^{0+}$, we have
\begin{equation*} 
M_j^{2M_j} = e^{2M_j \ln M_j} < e^{\frac{1}{8}M_jt^2} 
\ \ \text{ and } \ \ t^{4M_j} < e^{\frac{1}{8}M_jt^2}
\end{equation*}

\noindent
for $M_0$ sufficiently large.
Thus, we have
$r^{2 \cdot \# \{|n| \sim M_j\} -2} < e^{\frac{1}{4}M_jt^2} = e^{\frac{1}{4}r^2}$ 
for $ r > R.$
Hence,  we have
\begin{align} \label{A:highfreq1}
\mu\big(   \| \{ g_n \}_{|n| \sim M_j} \|_{L_n^{2}}  \gtrsim R_j \big)
\leq C \int_{R_j}^\infty e^{-\frac{1}{4}r^2} r dr  
\leq e^{-cR_j^2} = e^{-cC^2 M_0^{2\ld} M_j^{1+} \eps^2}.
\end{align}

\noindent
From \eqref{A:subadd} and \eqref{A:highfreq1}, we have
\begin{align*} 
\mu \big(\|\mathbb{P}_{>M_0}\phi\|_{\ft{b}^s_{p, \infty}} > \eps)
\leq \sum_{j =1}^\infty 
e^{-cC^2 M_0^{1+2\ld+} 2^{j+} \eps^2} \leq \tfrac{1}{2} \eps
\end{align*}

\noi
by choosing $M_0$ sufficiently large.
\end{proof}

\section{Local Well-Posedness in $\ft{b}^s_{p, \infty}$}
In this section, we prove Theorem \ref{THM:LWP2} via the fixed point argument.
In Subsection 4.1, we go over the previous local well-posedness theory of KdV
to motivate the definition of the Bourgain space $W^{s, b}_p$ with the weight,
adjusted to $\ft{b}^s_{p, \infty}$.
Then, we establish the basic linear estimates in Subsection 4.2.
Finally, we prove the crucial bilinear estimate in Subsection 4.3.

\subsection{Bourgain Space with a weight}

In \cite{KPV4}, Kenig-Ponce-Vega proved
\begin{equation} \label{KPVbilinear}
\| \dx(uv) \|_{X^{s, -\frac{1}{2}}} \lesssim \| u \|_{X^{s, \frac{1}{2}}} \| v \|_{X^{s, \frac{1}{2}}}, 
\end{equation}

\noi
for $s \geq -\frac{1}{2}$ under the mean 0 assumption on $u$ and $v$, where $X^{s, b}$ is defined in \eqref{Xsb}.
Their proof is based on proving the equivalent statement:
\begin{equation} \label{KPVdual}
\|B_{s}(f, g)\|_{L^2_{n, \tau} } \lesssim \|f\|_{L^2_{n, \tau}}\|g\|_{L^2_{n, \tau}}
\end{equation}

\noi
where $B_s (\cdot, \cdot)$ is defined by
\begin{equation} \label{BS}
B_s (f, g)(n, \tau) = \frac{1}{2\pi\jb{\tau-n^3}^{\frac{1}{2}}} \sum_{\substack{n_1 + n_2 = n\\n_1 \ne 0, n}}
\frac{|n| \jb{n}^s} {\jb{n_1}^s\jb{n_2}^s}
\intt_{\tau_1 + \tau_2 = \tau} 
\frac{f(n_1, \tau_1) g(n_2, \tau_2)d\tau_1}{\jb{\tau_1 - n_1^3}^\frac{1}{2}\jb{\tau_2 - n_2^3}^\frac{1}{2}}.
\end{equation}

\noi
One of the main ingredients is the observation due to Bourgain \cite{BO1}:
\begin{equation} \label{Walgebra}
n^3 - n_1^3 - n_2^3 = 3 n n_1 n_2, \ \text{for } n = n_1 + n_2,
\end{equation}

\noi
which in turn implies that 
\begin{equation} \label{Wmax}
\MAX := \max( \jb{\tau - n^3}, \jb{\tau_1 - n_1^3}, \jb{\tau_2 - n_2^3}) \gtrsim \jb{n n_1 n_2}
\end{equation}

\noi
for $n = n_1 + n_2$ and $\tau = \tau_1 + \tau_2$ with $n, n_1, n_2 \ne 0$.
Recall that \eqref{Wmax} implies that
\begin{equation} \label{KPVweight}
\frac{|n| \jb{n}^s} {\jb{n_1}^s\jb{n_2}^s} 
\frac{1}{\jb{\tau - n^3}^\frac{1}{2}\jb{\tau_1 - n_1^3}^\frac{1}{2}\jb{\tau_2 - n_2^3}^\frac{1}{2}} 
\lesssim \frac{|n| \jb{n}^s} {\jb{n_1}^s\jb{n_2}^s} \frac{1}{\MAX^\frac{1}{2}}
\lesssim 1
\end{equation}

\noi
for $ s \geq -\frac{1}{2}$.
Note that \eqref{KPVweight} is optimal,
for example, when $\jb{\tau-n^3} \sim \jb{3 n n_1 n_2}$
and $\jb{\tau_j - n_j^3} \ll \jb{3 n n_1 n_2}^{0+}$.
To exploit this along with the fact the free solution concentrates on the curve $\{ \tau = n^3\}$,
we define the weight $w(n, \tau)$ in the following.

For $k \in \mathbb{Z} \setminus \{0\}$ , let
\[ A_k = \{ (n, \tau): |n| \geq C, \jb{\tau - n^3 + 3n (n - k) k} \ll \jb{n}^\frac{1}{100} \}, \]

\noi
for some $C > 0$. With $\delta = 0+$ (to be determined later), let 
\begin{equation} \label{weight}
w( n, \tau) = 1 + \sum_{k \ne 0} \min( \jb{k}, \jb{n - k})^{\dl} \chi_{A_k}.
\end{equation}

\noi
Note that, for fixed $n$ and $\tau$, there are at most two values of $k$ 
such that $|(n-k)k +\frac{\tau - n^3}{3n}| \ll \jb{n}^{-1 + \frac{1}{100}}$. 
It follows from the definition that 
$w(n, \tau) \lesssim \max(1, \big(\frac{\jb{\tau-n^3}}{\jb{n}}\big)^{0+})
\leq \jb{\tau - n^3}^{0+}$.

Now, define the Bourgain space $W^{s, b}_p$ with the weight $w$ 
via the norm
\begin{equation}  \label{WBourgain}
\|u\|_{W^{s, b}_p} = \|\ft{u}\|_{\ft{W}^{s,b}_p} 
:= \| w \ft{u}\|_{\ft{X}^{s, b}_p} + \| \ft{u}\|_{\ft{Y}^{s, b-\frac{1}{2}}_p},
\end{equation}

\noi
where
\begin{align*} 
\begin{cases} \| f\|_{\ft{X}^{s, b}_p} 
:= \|\jb{n}^s \jb{\tau - n^3}^b f(n, \tau)\|_{b^0_{p, \infty}L^p_\tau} = \sup_j \|\jb{n}^s \jb{\tau - n^3}^b f(n, \tau)
\|_{L^p_{|n| \sim 2^j} L^p_\tau} \\
 \| f\|_{\ft{Y}^{s, b}_p} :=\|\jb{n}^s \jb{\tau - n^3}^b f(n, \tau)\|_{b^0_{p, \infty}L^1_\tau} 
 = \sup_j \|\jb{n}^s  \jb{\tau - n^3}^b f(n, \tau)
\|_{L^p_{|n| \sim 2^j} L^1_\tau}.
\end{cases}
\end{align*}

\noi
For our application, we set $b = \frac{1}{2}$.
Note that $Y^{s, 0}_p$ is introduced so that we have 
$W^{s, \frac{1}{2}}_p(\mathbb{T} \times [-T,T]) \subset C([-T, T]; \ft{b}^s_{p, \infty}(\mathbb{T}))$.
In the following, we take $p > 2$.

\subsection{Linear Estimates}

Let $S(t) = e^{-t \dx^3}$ and $\eta(t)$ be a smooth cutoff such that $\eta(t) = 1$ on $[-\frac{1}{2}, \frac{1}{2}]$ 
and $= 0$ for $|t| \geq 1$.

\begin{lemma} \label{LEM:linear1}
For any $s \in \mathbb{R}$, we have 
$\| \eta(t) S(t) u_0\|_{W^{s, \frac{1}{2}}_p} \lesssim \|u_0\|_{\ft{b}^s_{p, \infty}}$.
\end{lemma}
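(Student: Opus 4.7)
The plan is straightforward once we compute the space-time Fourier transform of $\eta(t) S(t) u_0$. Since $\widehat{S(t) u_0}(n) = e^{itn^3}\widehat{u_0}(n)$, we have
\[
\mathcal{F}_{x,t}\big[\eta(t) S(t) u_0\big](n, \tau) = \widehat{u_0}(n)\, \widehat{\eta}(\tau - n^3),
\]
so the entire space-time dependence factorizes cleanly. This separation reduces the estimate to controlling the modulation profile $\widehat{\eta}(\tau-n^3)$ in the appropriate $\tau$-norms, uniformly in $n$, and then applying Fubini to peel off $\|\jb{n}^s \widehat{u_0}(n)\|_{L^p_{|n|\sim 2^j}}$.

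For the $\ft{Y}^{s, 0}_p$ piece of $\|\cdot\|_{W^{s, 1/2}_p}$, I would use that $\widehat\eta$ is Schwartz, so $\|\widehat\eta(\tau - n^3)\|_{L^1_\tau} = \|\widehat\eta\|_{L^1} \lesssim 1$ uniformly in $n$. Taking $L^p$ in $|n|\sim 2^j$ and then supremum in $j$ yields exactly $\|u_0\|_{\ft{b}^s_{p,\infty}}$.

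For the weighted $\ft{X}^{s, 1/2}_p$ piece, the crucial ingredient is the pointwise bound $w(n,\tau) \lesssim \jb{\tau - n^3}^{0+}$ noted right after the definition \eqref{weight}. Combined with the factor $\jb{\tau-n^3}^{1/2}$ from the norm, the total $\tau$-profile is bounded by $\jb{\tau - n^3}^{1/2+}|\widehat\eta(\tau - n^3)|$, whose $L^p_\tau$ norm is finite and independent of $n$ by a change of variables $\sigma = \tau - n^3$ and the Schwartz decay of $\widehat\eta$. Because the outer norms in both the $n$ and $\tau$ variables are both $L^p$, Fubini lets me separate:
\[
\big\|w(n,\tau)\jb{n}^s\jb{\tau-n^3}^{\frac{1}{2}}\widehat{u_0}(n)\widehat\eta(\tau-n^3)\big\|_{L^p_{|n|\sim 2^j} L^p_\tau} \lesssim \|\jb{n}^s\widehat{u_0}(n)\|_{L^p_{|n|\sim 2^j}}.
\]
Supremum over $j$ again gives $\|u_0\|_{\ft{b}^s_{p,\infty}}$.

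There is no real obstacle here: the argument is entirely mechanical given (i) the factorization of the Fourier transform of the free propagator, (ii) the already-recorded bound $w \lesssim \jb{\tau - n^3}^{0+}$, and (iii) the Schwartz decay of $\widehat\eta$. The only mild point is to verify that $w \lesssim \jb{\tau-n^3}^{0+}$ survives after multiplication by $\jb{\tau-n^3}^{1/2}$, which it does since Schwartz decay beats any polynomial weight. This linear estimate is standard in spirit; the genuinely new content of the paper lies in the bilinear estimate of the next subsection, not here.
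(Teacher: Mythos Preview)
Your proposal is correct and follows essentially the same approach as the paper: compute the factorized Fourier transform $\ft{u_0}(n)\,\ft{\eta}(\tau-n^3)$, use the recorded bound $w(n,\tau)\lesssim \jb{\tau-n^3}^{0+}$ to reduce the $\ft{X}^{s,\frac{1}{2}}_p$ part to $\|\jb{\tau}^{\frac{1}{2}+}\ft{\eta}\|_{L^p_\tau}<\infty$, and handle the $\ft{Y}^{s,0}_p$ part via $\|\ft{\eta}\|_{L^1_\tau}<\infty$. The paper's proof is exactly this computation written out in one display.
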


\begin{proof}
Recall that $w(n, \tau) \lesssim \jb{\tau-n^3}^{0+}$.
Noting that $(\eta(t) S(t) u_0)^\wedge (n, \tau) = \ft{\eta}(\tau-n^3) \ft{u_0}(n)$, 
we have
\begin{align*}
\|\eta(t) S(t) u_0\|_{W^{s, \frac{1}{2}}_p}
& \leq \sup_j \big\| \jb{n}^s  \| \jb{\tau -n^3}^{\frac{1}{2}+}\ft{\eta}(\tau-n^3)\|_{L^p_\tau} 
|\ft{u_0}(n)|\big\|_{L^p_{|n|\sim 2^j}}\\
& \hphantom{X} + \sup_j \big\| \jb{n}^s  \|\ft{\eta}(\tau-n^3) \|_{L^1_\tau} |\ft{u_0}(n)|\big\|_{L^p_{|n|\sim 2^j}}
 \leq C_\eta \|u_0\|_{\ft{b}^s_{p, \infty}},
\end{align*}

\noi
where $C_\eta = \|\jb{\tau}^{\frac{1}{2}+}\ft{\eta}(\tau)\|_{L^p_\tau}  + \|\ft{\eta}\|_{L^1} < \infty$.
\end{proof}

Now, we estimate the Duhamel term.
By the standard computation \cite{BO1}, we have
\begin{align} \label{Duhamel}
 \int_0^t S(t-t') F(x, t') dt'
& = -i \sum_{k= 1}^\infty \frac{i^k t^k}{k!}  
\sum_{n \ne 0} e^{i(nx + n^3t)} \int \eta(\ld - n^3) \ft{F}(n, \ld) d\ld \notag \\
& \hphantom{X}+ i  \sum_{n \ne 0} e^{inx} \int 
\frac{\big(1-\eta\big)(\ld - n^3)}{\ld - n^3} e^{i \ld t} \ft{F}(n, \ld) d \ld \notag \\
& \hphantom{X}+ i  \sum_{n \ne 0} e^{i(nx + n^3 t)} \int 
\frac{\big(1-\eta\big)(\ld - n^3)}{\ld - n^3}  \ft{F}(n, \ld) d \ld \notag \\
& =:  \mathcal{N}_1(F)(x, t) + \mathcal{N}_2(F)(x, t)+ \mathcal{N}_3(F)(x, t).
\end{align}

\begin{lemma} \label{LEM:linear2}
For any $s \in \mathbb{R}$, we have 
\[ \| \eta (t) \mathcal{N}_1 (F)\|_{W^{s, \frac{1}{2}}_p}, \ 
\| \mathcal{N}_2(F) \|_{W^{s, \frac{1}{2}}_p},
\| \eta (t) \mathcal{N}_3 (F)\|_{W^{s, \frac{1}{2}}_p} 
\lesssim \| F \|_{W^{s, -\frac{1}{2}}_p}.\]

\end{lemma}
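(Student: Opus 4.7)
The plan is to handle the three pieces $\mathcal{N}_1, \mathcal{N}_2, \mathcal{N}_3$ separately. The $\mathcal{N}_1$ and $\mathcal{N}_3$ terms are essentially Airy free-solutions times a Schwartz time cutoff, so they should reduce to Lemma \ref{LEM:linear1}; the $\mathcal{N}_2$ term is a genuine space-time Fourier multiplier and should succumb to a direct pointwise estimate on the Fourier side.

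For $\mathcal{N}_2$, taking the space-time Fourier transform produces
\[ \widehat{\mathcal{N}_2(F)}(n, \tau) = i \, \frac{(1-\eta)(\tau - n^3)}{\tau - n^3} \, \ft F(n, \tau), \]
whose symbol obeys $\lesssim \jb{\tau-n^3}^{-1}$. Both components of the $W^{s,1/2}_p$ norm then transfer pointwise: the weight $w(n,\tau)$ multiplies both $\widehat{\mathcal{N}_2(F)}$ and $\ft F$ at the same point, so $\jb{\tau-n^3}^{1/2}\cdot\jb{\tau-n^3}^{-1}=\jb{\tau-n^3}^{-1/2}$ lands the $\ft X^{s,1/2}_p$ piece on $\ft X^{s,-1/2}_p$ of $F$, while the bare factor $\jb{\tau-n^3}^{-1}$ lands the $\ft Y^{s,0}_p$ piece on $\ft Y^{s,-1}_p$ of $F$.

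For $\mathcal{N}_3$, I would observe that $\mathcal{N}_3(F)(x,t)=i\,S(t)h(x)$ with $\ft h(n):=\int \frac{(1-\eta)(\ld-n^3)}{\ld-n^3}\ft F(n,\ld)\,d\ld$. Since $|(1-\eta)(\ld-n^3)/(\ld-n^3)|\lesssim \jb{\ld-n^3}^{-1}$, one gets $|\ft h(n)|\lesssim \int \jb{\ld-n^3}^{-1}|\ft F(n,\ld)|\,d\ld$, hence $\|h\|_{\ft b^s_{p,\infty}} \leq \|\ft F\|_{\ft Y^{s,-1}_p}\leq \|F\|_{W^{s,-1/2}_p}$, and Lemma \ref{LEM:linear1} finishes it. For $\mathcal{N}_1$, the power series collapses: $\mathcal{N}_1(F)(x,t)=-i(e^{it}-1)S(t)g(x)$ with $\ft g(n):=\int \eta(\ld-n^3)\ft F(n,\ld)\,d\ld$, so that $\eta(t)\mathcal{N}_1(F)=-i\widetilde\eta(t)S(t)g$ with the $C_c^\infty$ cutoff $\widetilde\eta(t):=\eta(t)(e^{it}-1)$. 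Because $\eta(\ld-n^3)$ is supported in $|\ld-n^3|\leq 1$ (where $\jb{\ld-n^3}\sim 1$), the same bound $|\ft g(n)|\lesssim \int \jb{\ld-n^3}^{-1}|\ft F(n,\ld)|\,d\ld$ holds, and Lemma \ref{LEM:linear1} applied to $\widetilde\eta$ (whose proof only uses the Schwartz character of the cutoff) closes the estimate.

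I do not expect a serious technical obstacle here; the real check is that the weight $w$ on the left is absorbed correctly. For $\mathcal{N}_2$ this is automatic because $w(n,\tau)$ sits on both sides at the same $(n,\tau)$, whereas for $\mathcal{N}_1,\mathcal{N}_3$ it is handled inside Lemma \ref{LEM:linear1} via $w(n,\tau)\lesssim \jb{\tau-n^3}^{0+}$ combined with the Schwartz decay of $\ft\eta$ and $\ft{\widetilde\eta}$. The key technical input throughout is the uniform pointwise domination of the two relevant symbols $\eta(\ld-n^3)$ and $(1-\eta)(\ld-n^3)/(\ld-n^3)$ by $\jb{\ld-n^3}^{-1}$, which matches exactly the $\ft Y^{s,-1}_p$ piece of $\|F\|_{W^{s,-1/2}_p}$ available to us.
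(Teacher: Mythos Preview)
Your treatment of $\mathcal{N}_2$ is identical to the paper's, and your reduction of $\mathcal{N}_3$ to Lemma~\ref{LEM:linear1} is a clean repackaging of what the paper does by direct computation; both are correct.

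For $\mathcal{N}_1$, however, there is a gap that stems from a typo in the paper's display \eqref{Duhamel}. The standard Duhamel expansion near the curve actually reads
\[
\mathcal{N}_1(F)(x,t) \;=\; -i\sum_{k\geq 1}\frac{t^k}{k!}\sum_{n\ne 0} e^{i(nx+n^3t)}\int \eta(\ld-n^3)\,\big(i(\ld-n^3)\big)^{k-1}\ft F(n,\ld)\,d\ld,
\]
i.e.\ the $\ld$-integral depends on $k$ through the factor $(\ld-n^3)^{k-1}$, so the series does \emph{not} collapse to $(e^{it}-1)\,S(t)g$ for a fixed $g$. (You can check that without this factor the three pieces in \eqref{Duhamel} do not add up to the Duhamel integral.) Your reduction to Lemma~\ref{LEM:linear1} therefore does not go through as stated. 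The paper's own argument bounds each term separately: with $\eta_k(t)=t^k\eta(t)$ one has $\|\jb{\tau}^{1/2+}\ft{\eta_k}\|_{L^p_\tau}\lesssim 1+k$ and $\|\ft{\eta_k}\|_{L^1_\tau}\lesssim 1+k$, and then $\sum_{k\geq 1}(1+k)/k!<\infty$ closes the estimate. This is robust to the omitted factor since $|(\ld-n^3)^{k-1}|\leq 1$ on the support of $\eta(\cdot-n^3)$. Your argument is easily repaired along these lines, but the collapse shortcut is not available.
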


\begin{proof}
Recall that $w(n, \tau) \lesssim \jb{\tau-n^3}^{0+}$.
Let $\eta_k(t) = t^k \eta(t)$.
First, note that $|\eta_k(t)| \leq |\eta(t)|$ since $\eta(t) =0 $ for $|t| \geq 1$.
Moreover, 
by Hausdorff-Young and H\"older inequalities, 
we have $\| \jb{\tau }^{\frac{1}{2}+} 
\ft{\eta_k} (\tau )\|_{L^p_\tau} \leq \|\eta_k\|_{H^{\frac{1}{2}+}_t}
\leq \|\eta_k\|_{H^{1}_t} \lesssim 1 + k$.
Then, by Minkowski integral inequality, we have
\begin{align*}
\| \eta (t) \mathcal{N}_1 (F)\|_{X^{s, \frac{1}{2}}_p}
\leq C_\eta \sup_j \Big\| \jb{n}^s \int \eta( \ld - n^3) |\ft{F}(n, \ld)| d \ld \Big\|_{L^p_{|n| \sim 2^j}}
\lesssim C_\eta \|F\|_{Y^{s, -1}_p},
\end{align*}

\noi
where 
$C_\eta = \sup_n \sum_{k = 1}^\infty \frac{1}{k!} \| \jb{\tau -n^3}^{\frac{1}{2}+} 
\ft{\eta_k} (\tau - n^3)\|_{L^p_\tau}
\leq \sum_{k = 1}^\infty \frac{\| \jb{\tau }^{\frac{1}{2}+} 
\ft{\eta_k} (\tau )\|_{L^p_\tau}}{k!}
\lesssim \sum_{k = 1}^\infty \frac{1+k}{k!} < \infty.$
Similarly, 
we have
\begin{align*}
\| \eta (t) \mathcal{N}_1 (F)\|_{Y^{s, 0}_p}
\leq C'_\eta \sup_j \Big\| \jb{n}^s \int \eta( \ld - n^3) |\ft{F}(n, \ld)| d \ld \Big\|_{L^p_{|n| \sim 2^j}}
\lesssim C'_\eta \|F\|_{Y^{s, -1}_p},
\end{align*}

\noi
where 
$C'_\eta = \sup_n \sum_{k = 1}^\infty \frac{1}{k!} \| 
\ft{\eta_k} (\tau - n^3)\|_{L^1_\tau}$.
Now, note that
\[\sup_n \|\ft{\eta_k} (\tau - n^3)\|_{L^1_\tau} 
\leq \sup_n \| \jb{\tau-n^3}^{-\frac{1}{p'}-} \|_{L^{p'}_\tau}
\| \jb{\tau -n^3}^{\frac{1}{p'}+} 
\ft{\eta_k} (\tau - n^3)\|_{L^p_\tau}
\lesssim 1 + k,\]

\noi
since $\frac{1}{p'}+ = 2 +$.
Hence, we have $C'_\eta < \infty$ as before.
 
For $|\tau -n^3| \gtrsim 1$, we have $|\tau-n^3|\sim \jb{\tau - n^3}$.
Thus, 
we have $\ft{\mathcal{N}_2(F)}(n, \tau) \lesssim \jb{\tau - n^3}^{-1} \ft{F}(n, \tau)$.
Then, by monotonicity (i.e. $\|f\|_{\ft{W}^{s, \frac{1}{2}}_p} \leq \|g\|_{\ft{W}^{s, \frac{1}{2}}_p}$ 
for $|f| \leq |g|$), we have
$\| \mathcal{N}_2(F) \|_{W^{s, \frac{1}{2}}_p}
\lesssim \| F \|_{W^{s, -\frac{1}{2}}_p}.$

Lastly, by Minkowski integral inequality with $w(n, \tau) \lesssim \jb{\tau-n^3}^{0+}$, we have 
\begin{align*}
\| \eta(t) \mathcal{N}_3(F) \|_{X^{s, \frac{1}{2}}_p}
&= \sup_j \big\| \jb{n}^s \jb{\tau-n^3}^{\frac{1}{2}+} \ft{\eta}(\tau-n^3) 
\int \frac{1 -\eta(\ld - n^3)}{\ld - n^3} |\ft{F}(n, \ld)| d \ld \big\|_{L^p_{|n|\sim 2^j} L^P_\tau} \\
&\leq C_\eta \|F\|_{Y^{s, -1}_p},
\end{align*}

\noi
where $C_\eta = \sup_n \| \jb{\tau-n^3}^{\frac{1}{2}+}\ft{\eta}(\tau-n^3) \|_{L^p_\tau}
= \| \jb{\tau}^{\frac{1}{2}+}\ft{\eta}(\tau) \|_{L^p_\tau} < \infty$.
Similarly, we have 
\begin{align*}
\| \eta (t) \mathcal{N}_3 (F)\|_{Y^{s, 0}_p}
\lesssim C'_\eta \|F\|_{Y^{s, -1}_p},
\end{align*}

\noi
where $C'_\eta = \sup_n \| \ft{\eta}(\tau-n^3) \|_{L^1_\tau}
= \|\ft{\eta}\|_{L^1_{\tau}} < \infty$.
\end{proof}

\subsection{Bilinear estimate}

 By expressing \eqref{KDV} in the integral formulation, 
we see that $u$ is a solution to \eqref{KDV} for $|t| \leq T \ll 1$
if and only if
$u$ satisfies
\begin{align*}
u(t) :& = \Phi^t_{u_0}(u) \notag \\
& =\eta(t) S(t) u_0 + 
 \eta (t) \mathcal{N}_1 (\eta_{_{2T}}F(u)) (t)
+ \mathcal{N}_2(\eta_{_{2T}}F(u)) (t)
+ \eta (t) \mathcal{N}_3 (\eta_{_{2T}}F(u))(t),
\end{align*}

\noi
where $F (u) = -u u_x$ and $\eta_{_{2T}}(t) = \eta(t/2T)$, 
i.e. $\eta_{_{2T}}(t) \equiv 1$ for $|t| \leq T$.
In this subsection, we prove the crucial bilinear estimate
so that $\Phi^t_{u_0}(\cdot)$ defined above is a contraction 
on a ball in
$W^s_p(\mathbb{T} \times [-T,T]) \subset C([-T, T], \ft{b}^s_{p, \infty}(\mathbb{T}))$
for $T$ sufficiently small.

\begin{proposition} \label{PROP:Wbilinear1}
Assume that $u$ and $v$ have the spatial means 0 for all $t \in \mathbb{R}$.
Then, there exist $s = -\frac{1}{2}+$, $p = 2+$ with $s p < -1$,
and $\theta > 0$
such that
\begin{equation} \label{Xbilinear1}
\| \eta_{_{2T}} \dx (uv) \|_{W^{s, -\frac{1}{2}}_p} \lesssim 
T^\theta \| u\|_{W^{s, \frac{1}{2}}_p}
\| v\|_{W^{s, \frac{1}{2}}_p}.
\end{equation}

\end{proposition}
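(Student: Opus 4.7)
The plan is to establish \eqref{Xbilinear1} by dualizing to a trilinear form, invoking Bourgain's algebraic identity \eqref{Walgebra}, and using the weight $w(n,\tau)$ precisely to rescue the cases in which the classical Kenig--Ponce--Vega argument fails at $s<-\tfrac{1}{2}$. First I would absorb the cutoff $\eta_{_{2T}}$ via the standard time-localization lemma for Bourgain spaces, which gives the $T^\theta$ factor at the cost of replacing the exponent $b=\tfrac12$ by $\tfrac12-$. Since the $Y^{s,-1}_p$ component of the LHS is controlled by the weighted $X^{s,-\tfrac12+}_p$ component after pulling out the finite factor $\|\jb{\tau-n^3}^{-\tfrac12-}\|_{L^{p'}_\tau}$, the core task becomes
\begin{equation*}
  \big\|\,w(n,\tau)\jb{n}^s\jb{\tau-n^3}^{-\tfrac12+}|n|\,\widehat{uv}(n,\tau)\,\big\|_{b^0_{p,\infty}L^p_\tau}
  \;\lesssim\;\|u\|_{X^{s,\tfrac12-}_p}\|v\|_{X^{s,\tfrac12-}_p}.
\end{equation*}

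Next, fixing an output dyadic shell $|n|\sim N_0$ and dualizing against $h$ in the predual of $L^p_{|n|\sim N_0}L^p_\tau$, the estimate reduces to bounding
\begin{equation*}
  \sum_{\substack{n=n_1+n_2\\ n_1,n_2\ne 0}}\iint_{\tau_1+\tau_2=\tau}
  \frac{|n|\jb{n}^s\,w(n,\tau)\,f(n_1,\tau_1)\,g(n_2,\tau_2)\,h(n,\tau)}
  {\jb{n_1}^s\jb{n_2}^s\jb{\tau-n^3}^{\tfrac12-}\jb{\tau_1-n_1^3}^{\tfrac12-}\jb{\tau_2-n_2^3}^{\tfrac12-}}
\end{equation*}
by a product of appropriate norms of $f,g,h$. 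The mean-zero hypothesis gives $n,n_1,n_2\ne 0$, and \eqref{Wmax} yields $\MAX\gtrsim|nn_1n_2|$. I would split into three cases according to which of $\jb{\tau-n^3}$, $\jb{\tau_1-n_1^3}$, $\jb{\tau_2-n_2^3}$ realizes $\MAX$. When $\MAX$ is an input modulation, say $\jb{\tau_1-n_1^3}$, integrating in $\tau$ by Cauchy--Schwarz and summing in $n_j$ by Young's convolution inequality on $\Z$ reduces matters to the symbol count $|n|\jb{n}^s|nn_1n_2|^{-\tfrac12+}\jb{n_1}^{-s}\jb{n_2}^{-s}$, and for $p=2+$, $s=-\tfrac12+$ with $sp<-1$ a direct power count closes this case with slack (the weight $w$ plays no role here).

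The delicate case is $\MAX=\jb{\tau-n^3}$: here the algebraic identity exactly cancels $\jb{\tau-n^3}^{-\tfrac12+}$ on the left and leaves the borderline KPV symbol. I would further split according to whether $(n,\tau)\in\bigcup_k A_k$. Off the $A_k$'s one has $|\tau-n^3+3n(n-n_1)n_1|\gtrsim\jb{n}^{1/100}$ for every admissible $n_1$, and a dyadic decomposition of this secondary modulation yields a small negative power of $\jb{n}$ which restores summability in $n_1$. On $A_{n_1}$ (and symmetrically $A_{n_2}$), the weight provides $w(n,\tau)\gtrsim\min(\jb{n_1},\jb{n_2})^{\dl}$; since at most $O(1)$ values of $k$ can place a fixed $(n,\tau)$ in $A_k$, the extra $\jb{n_1}^{\dl}$ combines with the $L^p_\tau$ integration concentrated on the thin set of size $\sim\jb{n}^{1/100}$ and with the $b^0_{p,\infty}$-summability in $n$ to close the estimate. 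The main obstacle is precisely this resonant subcase with $|n_1|\sim|n_2|\gg|n|$: the ratio $\jb{n}^s/\bigl(\jb{n_1}^s\jb{n_2}^s\bigr)$ grows polynomially in $N:=|n_1|$ once $s<-\tfrac12$, and closing the sum requires simultaneously using the $\dl$-power gain from $w$, the restriction of $\tau$ to a thin interval forced by $A_{n_1}$, and the condition $sp<-1$ with $p$ slightly above $2$; removing any one of these ingredients causes the resonant sum to diverge.
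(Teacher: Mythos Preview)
Your sketch misidentifies where the weight $w$ does its work, and this is a genuine gap. In the norm $\|F\|_{W^{s,-\frac12}_p}=\|w\ft{F}\|_{\ft{X}^{s,-\frac12}_p}+\|\ft{F}\|_{\ft{Y}^{s,-1}_p}$, the factor $w(n,\tau)$ on the output is a \emph{multiplier}: to control the left side you need an \emph{upper} bound on it (e.g.\ $w(n,\tau)\lesssim\jb{\tau-n^3}^{0+}$ or $\lesssim\min(\jb{n_1},\jb{n_2})^\dl$). Your assertion that ``$w(n,\tau)\gtrsim\min(\jb{n_1},\jb{n_2})^{\dl}$'' helps close the resonant sum has the inequality pointing the wrong way---a lower bound on the output multiplier only enlarges the left side. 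The weight actually rescues the estimate on the \emph{input} side: since $\|u\|_{W^{s,\frac12}_p}\ge\|w\ft{u}\|_{\ft{X}^{s,\frac12}_p}$, the substitution $f=w\jb{n_1}^s\jb{\tau_1-n_1^3}^{\frac12}\ft{u}$ places $w(n_1,\tau_1)w(n_2,\tau_2)$ in the \emph{denominator} of the reduced bilinear operator, and there a lower bound is a genuine gain. By writing the right side as $\|u\|_{X^{s,\frac12-}_p}\|v\|_{X^{s,\frac12-}_p}$ you have discarded these input weights entirely.

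Concretely, the case you dismiss as closing ``with slack''---$\MAX$ equal to an input modulation, say $\jb{\tau_2-n_2^3}$---is exactly where the input weight is indispensable. When in addition the secondary modulation satisfies $\jb{\tau_2-n_2^3-3nn_1n_2}\ll\jb{n_2}^{1/100}$ (so that $(n_2,\tau_2)\in A_n\cup A_{n_1}$), the $\tau$-integration via Lemma~\ref{LEM:GTV} returns no decay, and the bare symbol bound $\jb{n_j}^{-(\frac12+s)}=\jb{n_j}^{0-}$ is not enough to sum the dyadic shells; the paper closes this subcase precisely by invoking $w(n_2,\tau_2)\gtrsim\jb{n_j}^\dl$ in the denominator (Subcases~(2.a) and~(3.a)). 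Conversely, in the case $\MAX=\jb{\tau-n^3}$ that you flag as the main obstacle, the paper never exploits the weight beneficially: it disposes of $w(n,\tau)$ by the crude upper bound and obtains summability instead from the secondary-modulation estimate $\jb{\tau-n^3+3nn_1n_2}\gtrsim\jb{n}\jb{\ld+n_1(n-n_1)}$ on $B_{n,\tau}$ together with Lemma~\ref{LEM:Psum}. Your thin-set observation for $\tau$ on $A_{n_1}$ does not substitute, because once $(n,\tau)\in A_{n_1}$ the output weight is working against you, not for you.
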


Before proving Proposition \ref{PROP:Wbilinear1}, 
we present some lemmata.

\begin{lemma} [Ginibre-Tsutsumi-Velo \cite{GTV}, Lemma 4.2 ]\label{LEM:GTV}
Let $ 0 \leq \al \leq \beta$ and $ \al + \beta > \frac{1}{2}$.
Then, we have 
\[ \int \jb{\tau}^{-2\al} \jb{\tau -a }^{-2\beta} d \tau  \lesssim \jb{a}^{-\g},\]

\noindent
where $\g = 2\al - [1 - 2\beta]_+$ with 
$[x]_+ = x$ if $x>0$, $= \eps > 0$ if $x = 0$, and $ = 0$ if $x < 0$. 
\end{lemma}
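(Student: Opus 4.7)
The plan is to prove this convolution bound by a direct case analysis based on the size of $|a|$, together with a decomposition of the $\tau$-axis into three natural regions. When $|a| \lesssim 1$, the estimate is essentially trivial: one has $\jb{a} \sim 1$, and the hypothesis $2\al + 2\beta > 1$ makes $\int \jb{\tau}^{-2\al}\jb{\tau-a}^{-2\beta} d\tau$ uniformly bounded in $|a| \leq 1$ (e.g.\ by splitting near $\tau = 0$ and $\tau = a$ and using $\jb{\tau}^{-2\al}\jb{\tau-a}^{-2\beta} \leq \jb{\tau}^{-2\al-2\beta} + \jb{\tau-a}^{-2\al-2\beta}$ up to constants).

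The substantive case is $|a| \gg 1$, where I would decompose $\mathbb{R} = I_1 \cup I_2 \cup I_3$ with
\[
I_1 = \{|\tau| \leq \tfrac{|a|}{2}\}, \quad I_2 = \{|\tau - a| \leq \tfrac{|a|}{2}\}, \quad I_3 = \{|\tau|, |\tau-a| > \tfrac{|a|}{2}\}.
\]
On $I_1$ one has $\jb{\tau - a} \sim \jb{a}$, reducing the contribution to $\jb{a}^{-2\beta}\int_{I_1} \jb{\tau}^{-2\al} d\tau$; the remaining integral is bounded by $1$, $\log\jb{a}$, or $\jb{a}^{1-2\al}$ according as $2\al > 1$, $=1$, or $<1$. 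The region $I_2$ is handled symmetrically with the roles of $\al$ and $\beta$ swapped, yielding $\jb{a}^{-2\al}$ times a factor controlled by the size of $2\beta$. On $I_3$ one has $\jb{\tau} \sim \jb{\tau-a}$, so the integrand is bounded by $\jb{\tau}^{-2\al - 2\beta}$; since $2\al+2\beta>1$, integrating over $|\tau| \gtrsim |a|$ produces $\jb{a}^{1-2\al-2\beta}$.

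Combining the three contributions and using $\al \leq \beta$ to identify the dominant term, I would verify the exponent matches $\g = 2\al - [1-2\beta]_+$: when $2\beta > 1$, the $I_1$-term is $O(\jb{a}^{-2\beta})$ which is dominated by the $I_2$-term $O(\jb{a}^{-2\al})$, so $\g = 2\al$; when $2\beta < 1$, both the $I_1$- and $I_2$-terms, as well as the $I_3$-term, give $\jb{a}^{1-2\al-2\beta}$, which is exactly $\jb{a}^{-\g}$ with $\g = 2\al + 2\beta - 1$. The endpoint $2\beta = 1$ produces a logarithmic factor from the $I_1$-integration, which is absorbed into the $\eps>0$ appearing in the definition of $[\,\cdot\,]_+$; the same device handles $2\al = 1$.

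The main obstacle is not genuinely analytical but rather a careful accounting of the endpoint cases $2\al = 1$ and $2\beta = 1$, where the naive power-counting produces a logarithm that must be swept into the $\eps$ convention. Everything else reduces to elementary integration of power weights on intervals, and the hypothesis $\al + \beta > \tfrac{1}{2}$ enters in exactly one place: guaranteeing convergence of the $I_3$-integral at infinity.
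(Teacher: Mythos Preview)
The paper does not prove this lemma; it is quoted directly from Ginibre--Tsutsumi--Velo \cite{GTV} and used as a black box, so there is no in-paper argument to compare against. Your proof is correct and is precisely the standard one: localize near each weight's center and at infinity, compute the elementary power integrals on each piece, and use $\al\le\beta$ to identify the dominant contribution, absorbing the logarithms at the endpoints $2\al=1$, $2\beta=1$ into the $\eps$ convention. One minor inaccuracy in your summary paragraph: the hypothesis $\al+\beta>\tfrac12$ is invoked not only for the convergence of the $I_3$-tail but also to make the integral finite in the trivial case $|a|\lesssim 1$ (as you yourself use earlier), so it enters in two places rather than ``exactly one''; this does not affect the argument.
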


\begin{lemma} \label{LEM:Psum}
For $l_1 + 2l_2>1$ with $l_1, l_2 > 0$, there exists $c > 0$ such that for all $n\ne 0$ and $\ld \in \R$, we have
\begin{equation} \label{Psum1}
 \sum_{n_1 \ne 0, n} \frac{1}{\jb{n_1}^{l_1}}\frac{1}
{\jb{\ld + n_1(n-n_1)}^{l_2}} < c.
\end{equation}

\end{lemma}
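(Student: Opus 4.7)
The plan is to exploit the quadratic structure of $q(n_1) := \lambda + n_1(n - n_1) = c - (n_1 - n/2)^2$, where $c = \lambda + n^2/4$, whose derivative $q'(n_1) = n - 2n_1$ controls the spacing of consecutive $q$-values at integer points. I would perform a dyadic decomposition in $|n_1| \sim N_1$, writing
\[ \sum_{n_1 \ne 0, n} \frac{1}{\jb{n_1}^{l_1} \jb{q(n_1)}^{l_2}} \lesssim \sum_{N_1 \text{ dyadic}} N_1^{-l_1} T_{N_1}, \qquad T_{N_1} := \sum_{|n_1| \sim N_1} \jb{q(n_1)}^{-l_2}, \]
and then estimate $T_{N_1}$ in two regimes determined by the location of the vertex $n_1 = n/2$.

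In Regime I ($N_1 \ll |n|$) the derivative $|q'| \sim |n|$ is essentially constant on the shell, so $q$ is monotone there. A further dyadic decomposition $|q| \sim M$ shows that at most $\lesssim 1 + M/|n|$ integers in the shell contribute to each level set. Summing in $M$ with weight $M^{-l_2}$ up to $M \lesssim N_1|n|$ yields $T_{N_1} \lesssim 1 + N_1^{1-l_2} |n|^{-l_2}$ (with a harmless logarithm at the endpoint $l_2 = 1$). In Regime II ($N_1 \gtrsim |n|$) the vertex may lie in or near the shell, so I would substitute $m_1 = n_1 - n/2$, decompose further by $|m_1| \sim L$ with $|n|/2 \lesssim L \lesssim N_1$, and use $|q'(n_1)| = 2|m_1| \sim L$: on each such strip the values $c - m_1^2$ are spaced by $\sim L$, so at most $\lesssim 1 + M/L$ integers satisfy $|q| \sim M$. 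Summing dyadically in $M$ and then in $L$ produces a bound on $T_{N_1}$ of the form $T_{N_1} \lesssim N_1^{\max(0,\, 1 - 2l_2)}$ up to logarithmic factors.

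Plugging these bounds into $\sum_{N_1} N_1^{-l_1} T_{N_1}$ leaves only elementary geometric and power series, all of which converge uniformly in $n$ and $\lambda$ precisely when $l_1, l_2 > 0$ and $l_1 + 2l_2 > 1$. The main obstacle is Regime II near the vertex of the parabola, where the simple bound $|q'| \sim |n|$ of Regime I collapses and $q$-values cluster densely: only the strict inequality $l_1 + 2l_2 > 1$ absorbs this clustering after the double dyadic decomposition. A small amount of book-keeping also handles the subcase of very large $|c|$, in which every $q(n_1)$ on the shell is comparable to $|c|$ and the factor $\jb{q(n_1)}^{-l_2}$ already supplies uniform smallness, so that the naive bound $\#\{|n_1| \sim N_1\} \lesssim N_1$ combined with $N_1^{-l_1}$ suffices.
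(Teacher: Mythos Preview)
Your approach is correct and genuinely different from the paper's. The paper argues by factoring the quadratic: when $P_{n,\lambda}(n_1)=\lambda+n_1(n-n_1)$ has real roots $r_1,r_2$, one has $\jb{P_{n,\lambda}(n_1)}\gtrsim \jb{n_1-r_1}\jb{n_1-r_2}$ away from the roots, and a single application of H\"older's inequality with exponents $p=(l_1-\eps)^{-1}$, $q=(l_2-\eps)^{-1}$ splits the sum into three translates of $\sum\jb{m}^{-\alpha}$; when there are no real roots, $|P_{n,\lambda}(n_1)|\ge (n_1-n/2)^2$ and a two-factor H\"older suffices. This is very short and avoids all dyadic bookkeeping. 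Your route---dyadic shells in $|n_1|$, then level-set counting via the derivative $|q'(n_1)|=|n-2n_1|$---is more elementary in spirit (no H\"older, no root location), and it makes the geometric reason for the threshold $l_1+2l_2>1$ transparent: it is exactly the clustering near the vertex that forces it. One small correction to your sketch: in Regime~II with $N_1\sim|n|$, the substitution $m_1=n_1-n/2$ allows $|m_1|$ to be as small as $O(1)$, not only $\gtrsim|n|/2$; your stated range for $L$ should start at $L\sim 1$ in that boundary shell. This does not affect the conclusion, since the resulting contribution $\sum_{L\lesssim N_1} L^{1-2l_2}$ (when $c$ is not large) is still controlled by $N_1^{\max(0,1-2l_2)}$, and after multiplying by $N_1^{-l_1}\sim|n|^{-l_1}$ the condition $l_1+2l_2>1$ closes the estimate uniformly.
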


\begin{proof}
  When $l_2 = 0$, \eqref{Psum1} is clear. When $l_1 = 0$, \eqref{Psum1} follows from Lemma 5.3 in \cite{KPV7}.
Thus, we assume $l_1, l_2 > 0$ in the following.
Since $l_1 + 2l_2>1$, there exists $\eps > 0$ such that $l_1 + 2l_2 - 3\eps \geq 1$.

If $P_{n, \ld} (n_1) := \ld + n_1(n-n_1)$ has two real roots, i.e. $P_{n, \ld} (n_1)= -(n_1 -r_1)(n_1 -r_2)$, then
there are at most 6 values of $n_1$ such that $|n_1 -r_j| \leq 1$.
For the remaining values of $n_1$, we have
$\jb{P_{n, \ld} (n_1)} > \frac{1}{4} \prod_{j = 1}^2 \jb{n_1 - r_j}$.
Then, \eqref{Psum1} follows from H\"older inequality with $p = (l_1 - \eps)^{-1}$ and $q = (l_2 - \eps)^{-1}$, we have 
\[ \text{LHS of } \eqref{Psum1} 
\lesssim \Big( \sum_{n_1} {\jb{n_1}^{-p l_1} }\Big)^\frac{1}{p}
\prod_{j = 1}^2\Big(\sum_{n_1}  {\jb{ n_1 -r_j}^{-ql_2}} \Big)^\frac{1}{q}
< c < \infty,\]

\noindent
since $ p l_1 > 1$ and $q l_2 > 1$.

If $P_{n, \ld} (n_1)$ has only one  or no real root, then we have 
$| P_{n, \ld} (n_1)| \geq (n_1 -\frac{1}{2}n)^2$ for all $n_1 \in \mathbb{Z}$.
Then, by H\"older inequality with $p = (l_1 - \eps)^{-1}$ and $q = (2l_2 - 2\eps)^{-1}$, we have
\[ \text{LHS of } \eqref{Psum1} 
\leq \Big( \sum_{n_1}  {\jb{n_1}^{-p l_1} }\Big)^\frac{1}{p}
\Big(\sum_{n_1}  {\jb{ (n_1 -\tfrac{1}{2}n)^2}^{-ql_2}} \Big)^\frac{1}{q}
< c < \infty,\]

\noindent
since $ p l_1 > 1$ and $2q l_2 = \frac{l_2}{l_2 - \eps}> 1$.

\end{proof}

\noindent
Lastly, recall the following lemma from \cite[(7.50) and Lemma 7.4]{CKSTT4}.

\begin{lemma} \label{LEM:closetocurve}
Let 
\begin{equation*} 
 \Omega(n) = \{ \eta \in \R : \eta = -3 n n_1 n_2 + o(\jb{n n_1 n_2}^\frac{1}{100}) \text{ for some } n_1 \in \mathbb{Z} 
\text{ with } n = n_1 + n_2 \}. 
\end{equation*}

\noindent
Then, we have
\begin{equation} \label{closetocurve}
 \int \jb{\tau - n^3}^{-\frac{3}{4}} \chi_{\Omega(n)} (\tau - n^3) d \tau \lesssim 1. 
 \end{equation}
\end{lemma}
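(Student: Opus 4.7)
The plan is to decompose $\Omega(n)$ according to the splitting parameter $n_1$, bound the integral on each piece, and sum. After the change of variable $\eta = \tau - n^3$, it suffices to show that $\int \jb{\eta}^{-3/4}\chi_{\Omega(n)}(\eta)\, d\eta \lesssim 1$ uniformly in $n$. Setting $\eta_{n_1} := -3nn_1(n-n_1)$ and letting $I_{n_1}$ be the interval of half-width $\ll \jb{\eta_{n_1}}^{1/100}$ centered at $\eta_{n_1}$, the very definition of $\Omega(n)$ gives $\Omega(n) \subseteq \bigcup_{n_1 \in \Z} I_{n_1}$, reducing the task to bounding $\sum_{n_1} \int_{I_{n_1}} \jb{\eta}^{-3/4}d\eta$.

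Next, I would dispatch each $I_{n_1}$ by comparing its width to its center. When $\jb{\eta_{n_1}} \lesssim 1$, which includes $n_1 \in \{0, n\}$ and at most $O(1)$ other indices, the interval sits in a fixed neighborhood of $0$ and contributes $O(1)$. Otherwise $\jb{\eta_{n_1}}^{1/100} \ll \jb{\eta_{n_1}}$, so $\jb{\eta} \sim \jb{\eta_{n_1}}$ throughout $I_{n_1}$ and
\[ \int_{I_{n_1}}\jb{\eta}^{-3/4}\,d\eta \lesssim \jb{\eta_{n_1}}^{-\frac{3}{4}+\frac{1}{100}} = \jb{3nn_1(n-n_1)}^{-\frac{74}{100}}. \]

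It then remains to verify $\sum_{n_1 \in \Z}\jb{nn_1(n-n_1)}^{-74/100} \lesssim 1$ uniformly in $n$, which is the heart of the argument. For $n = 0$ the set $\Omega(0)$ is itself bounded and the bound is immediate, so assume $|n| \geq 1$. I would split by the location of $n_1$ relative to $[0, n]$: on $1 \leq n_1 \leq |n|-1$, factor $\jb{nn_1(n-n_1)}^{-74/100} \lesssim |n|^{-74/100}(n_1(|n|-n_1))^{-74/100}$ and compare the sum to $|n|\int_0^1 (y(1-y))^{-74/100}dy$, which converges since $74/100 < 1$, yielding a contribution of order $|n|^{-122/100}$; on the tails $|n_1| \gtrsim |n|$ one has $|nn_1(n-n_1)| \gtrsim |n|\, n_1^2$, so the sum is bounded by $\sim |n|^{-74/100}\sum_{|n_1|\gtrsim |n|}|n_1|^{-148/100}$, also of order $|n|^{-74/100}$. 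Both pieces are uniformly bounded.

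The main obstacle is the bookkeeping that makes the uniformity in $n$ come out: the exponents $3/4$ in the lemma and $1/100$ in the definition of $\Omega(n)$ are tuned precisely so that, after absorbing the interval length $\jb{\eta_{n_1}}^{1/100}$, the residual exponent $74/100$ remains strictly above the threshold $1/2$ required for the quadratic-in-$n_1$ sum to converge. This balance between the width of the resonance regions and the decay of $\jb{\eta}^{-3/4}$ is the whole content of the lemma.
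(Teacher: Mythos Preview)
Your argument is correct. The paper does not supply its own proof of this lemma; it simply quotes \cite[(7.50) and Lemma 7.4]{CKSTT4} and remarks that the exponent $-\tfrac{3}{4}$ may be replaced by $-\alpha$ for any $\alpha>\tfrac{2}{3}+\tfrac{1}{100}$. Your direct approach---covering $\Omega(n)$ by the intervals $I_{n_1}$ and summing the resulting contributions $\jb{nn_1n_2}^{-74/100}$---is the natural one, and in fact your final paragraph shows the lemma holds for any $\alpha>\tfrac{1}{2}+\tfrac{1}{100}$, a sharper range than the paper records.

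One small bookkeeping issue: your two ranges $1\le n_1\le |n|-1$ and ``tails $|n_1|\gtrsim |n|$'' do not literally exhaust $\Z$ (for $n>0$, say, the case $n_1<0$ with $|n_1|<|n|$ is missing), and the claimed tail estimate $|nn_1(n-n_1)|\gtrsim |n|\,n_1^2$ requires $|n-n_1|\sim |n_1|$, which fails for $n_1$ close to $n$. Both gaps close immediately via the symmetry $n_1\leftrightarrow n-n_1$: setting $m=\min(|n_1|,|n-n_1|)$ one has $|nn_1(n-n_1)|\sim |n|^2 m$ for $1\le m\le |n|$ and $|nn_1(n-n_1)|\sim |n|\,m^2$ for $m>|n|$, and then the sum over $m$ converges exactly by the computation you give.
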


\noi
Note that 
\eqref{closetocurve} is stated with $\jb{\tau - n^3}^{-1}$ in \cite{CKSTT4}.
However, by examining the proof of Lemma 7.4 in \cite{CKSTT4}, 
one immediately sees that \eqref{closetocurve} is valid 
with $\jb{\tau - n^3}^{-\alpha}$ for any $\alpha > \frac{2}{3}+\frac{1}{100}$.

\begin{proof} [Proof of Proposition \ref{PROP:Wbilinear1}]
In the proof, we use $(n, \tau)$, $(n_1, \tau_1)$, and $(n_2, \tau_2)$ to denote the Fourier variables
for $uv$, $u$, and $v$, respectively.
i.e. we have $n = n_1 + n_2$ and $\tau = \tau_1 + \tau_2$
Moreover, by the mean 0 assumption on $u$ and $v$ and 
by the fact that we have $\dx (uv)$ on the left hand side of \eqref{Xbilinear1}, 
we assume  $n, n_1, n_2 \ne 0$ in the following.

First, we prove
\begin{equation} \label{Xbilinear2}
\|  \dx (uv) \|_{W^{s, -\frac{1}{2}}_p} \lesssim 
 \| u\|_{W^{s, \frac{1}{2}}_p}
\| v\|_{W^{s, \frac{1}{2}}_p}.
\end{equation}

\noi
i.e. we first prove \eqref{Xbilinear1} with no gain of $T^\theta$.
Then, it suffices to show
\begin{equation} \label{Wbilinear}
 \| B(f, g)(n, \tau) \|_{\ft{W}_p^{0, -\frac{1}{2}}} 
\lesssim \|f\|_{b^0_{p, \infty} L^p_\tau}\|g\|_{b^0_{p, \infty}L^p_\tau}.
\end{equation}

\noi
where $B(\cdot, \cdot)$ is defined by
\begin{equation*}
B(f, g)(n, \tau) = 
\frac{1}{2\pi} \sum_{n_1 + n_2 = n}
\frac{|n| \jb{n}^s} {\jb{n_1}^s\jb{n_2}^s}
\intt_{\tau_1 + \tau_2 = \tau} 
\frac{f(n_1, \tau_1) g(n_2, \tau_2)d\tau_1}{\prod_{j = 1}^2 w(n_j, \tau_j)\jb{\tau_j - n_j^3}^\frac{1}{2}}.
\end{equation*}

\noi
Let $\MAX := \max(\jb{\tau - n^3}, \jb{\tau_1 - n_1^3}, \jb{\tau_2 - n_2^3}) $.
Then, by \eqref{Walgebra}, we have  $\MAX \gtrsim \jb{n n_1 n_2}$.

\medskip

\noi
$\bullet$ {\bf PART 1:}
First, we consider the $\ft{X}^{0, -\frac{1}{2}}_p$ part 
of the $\ft{W}_p^{0, -\frac{1}{2}}$ norm on the left hand side of \eqref{Wbilinear}.

\noi
$\bullet$ {\bf Case (1):} $\MAX = \jb{\tau - n^3}$.
Without loss of generality, assume $|n_1| \geq |n_2|$.
For fixed $n\ne0$ and $\tau$, let $\ld = \frac{\tau - n^3}{3n}$ and define 
\begin{align*}
B_{n, \tau} = \{ n_1 & \in \mathbb{Z}: |n_1 - r_j| \geq 1, j = 1, 2\\
& r_j \ \text{ is a real root of } P_{n, \ld}(n_1) := \ld + n_1 (n - n_1) \\
& \text{or } r_j = \frac{1}{2}n \ \text{ if no real root} \}.
\end{align*}

\noi
On $B_{n, \tau}$, we have
\begin{equation} \label{Paway1}
\jb{\tau - n^3 + 3 n n_1 n_2} \gtrsim \jb{n} \jb{\ld + n_1(n-n_1)}.
\end{equation}

\noi
$\circ$ Subcase (1.a): On $B_{n, \tau}^c$. \
For $s > -\frac{1}{2}$, we have 
\begin{equation} \label{upperbd1}
\frac{|n|\jb{n}^s}{\jb{n_1}^s\jb{n_2}^s} \frac{1}{\MAX^\frac{1}{2}}
\lesssim \frac{1}{\jb{n_2}^{\frac{1}{2} + s}}.
\end{equation}

\noi
By Lemma \ref{LEM:GTV},  we have 
\[ \| \jb{\tau_1 - n_1^3}^{-\frac{1}{2}} \jb{\tau_2 - n_2^3}^{-\frac{1}{2}} \|_{L^{p'}_{\tau_1}}
\lesssim \jb{\tau - n^3 + 3 n n_1 n_2}^{-1+\frac{1}{p'}}.\]

\noi
Note that for fixed $n$ and $\tau$ there are at most four values of $n_1 \in B^c_{n, \tau}$.
i.e. the summation over $n_1$ can be replace by the $L^{p}_{n_1}$ norm.
Then, by H\"older inequality, we have
\begin{align*}
\text{LHS of \eqref{Wbilinear} } & \lesssim 
\sup_j \bigg\| \sum_{n = n_1+n_2 } \frac{w(n, \tau)}{\jb{n_2}^{\frac{1}{2}+s}}
\intt_{\tau = \tau_1 + \tau_2} \frac{f(n_1, \tau_1) g(n_2, \tau_2) d\tau_1}
{\jb{\tau_1 - n_1^3}^\frac{1}{2} \jb{\tau_2 - n_2^3}^\frac{1}{2}}
\bigg\|_{L^p_{|n| \sim 2^j} L^p_\tau} \\
& \lesssim 
\sup_j \big\|  \frac{w(n, \tau)}{\jb{n_2}^{\frac{1}{2}+s}}
\|f(n_1, \cdot)\|_{L^p_\tau}\|g(n_2, \cdot)\|_{L^p_\tau} 
\big\|_{L^p_{|n| \sim 2^j} L^p_{n_1}}.
\end{align*}

\noindent
Note that $w(n, \tau) \lesssim \jb{n_2}^\dl$ since $|n_1| \geq |n_2|$.  
If $|n_1| \gg |n_2|$ and $|n| \sim 2^j$, then we have $|n_1| \sim 2^{k}$ where  $|k - j| \leq 5$.
\begin{align*}
\text{LHS of \eqref{Wbilinear} } & \lesssim 
\sup_j \bigg(
\sum_{|k - j| \leq 5} \sum_{|n_1| \sim 2^k }
\sum_{l = 0}^\infty \sum_{|n_2| \sim 2^l}
  \jb{n_2}^{(-\frac{1}{2}-s +\dl)p}
\|f(n_1, \cdot)\|^p_{L^p_\tau}\|g(n_2, \cdot)\|^p_{L^p_\tau} 
 \bigg)^\frac{1}{p} \\
& \lesssim  
\sum_{l = 0}^\infty 2^{(-\frac{1}{2}-s +\dl)p \, l}
  \sup_k \|f\|_{L^p_{|n| \sim 2^k } L^p_\tau}
\sup_l \|g \|_{L^p_{|n| \sim 2^l}L^p_\tau} 
\lesssim  
\|f\|_{b^0_{p, \infty}L^p_\tau}\|g\|_{b^0_{p, \infty}L^p_\tau},
 \end{align*}

\noi
by taking $\dl > 0$ sufficiently small such that $ -\frac{1}{2}-s +\dl< 0$.
Similarly, if $|n_1| \sim |n_2|$ and $|n_2| \sim 2^l$, then we have $|n_1| \sim 2^{k}$ where  $|k - l| \leq 5$.
\begin{align*}
\text{LHS of \eqref{Wbilinear} } & \lesssim 
 \bigg(
\sum_{l = 0}^\infty \sum_{|k - l| \leq 5} \sum_{|n_1| \sim 2^k }\sum_{|n_2| \sim 2^l}
  \jb{n_2}^{(-\frac{1}{2}-s +\dl)p}
\|f(n_1, \cdot)\|^p_{L^p_\tau}\|g(n_2, \cdot)\|^p_{L^p_\tau} 
 \bigg)^\frac{1}{p} \\
& \lesssim  
\sum_{l = 0}^\infty 2^{(-\frac{1}{2}-s +\dl)p \, l}
  \sup_k \|f\|_{L^p_{|n| \sim 2^k } L^p_\tau}
\sup_l \|g \|_{L^p_{|n| \sim 2^l}L^p_\tau} 
\lesssim  
\|f\|_{b^0_{p, \infty }L^p_\tau}\|g\|_{b^0_{p, \infty } L^p_\tau}.
 \end{align*}

\noi
$\circ$ Subcase (1.b): On $B_{n, \tau}$. \
In this case, we have \eqref{Paway1}.
Also, recall that $w(n, \tau) \lesssim \jb{\tau - n^3}^{0+}$.
Moreover, 
$\jb{\tau-n^3}^{0+} \lesssim \max(  \jb{n}, \jb{n_2}, \jb{\tau - n^3 + 3 n n_1 n_2})^{0+}$
since either $\jb{\tau-n^3} \gg |n n_1 n_2|$ or $\jb{\tau-n^3} \lesssim |n n_1 n_2| \lesssim \max(  \jb{n}^3, \jb{n_2}^3)$.
In particular, by \eqref{Paway1}, we have
\begin{equation} \label{wbound1}
w(n, \tau)  \lesssim (\jb{n_2}\jb{\tau - n^3 + 3 n n_1 n_2})^{0+}.
\end{equation}

\noi
By applying H\"older inequality and proceeding as before, we have
\begin{align*}
\text{LHS of \eqref{Wbilinear} } \lesssim M \,
\sup_j \big\|  \jb{n_2}^{0-}
\|f(n_1, \cdot)\|_{L^p_\tau}\|g(n_2, \cdot)\|_{L^p_\tau} 
\big\|_{L^p_{|n| \sim 2^j} L^p_{n_1}}
\lesssim M \|f\|_{b^0_{p, \infty}L^p_\tau}\|g\|_{b^0_{p, \infty}L^p_\tau},
\end{align*}

\noi
where
\[M = \sup_{n, \tau} \bigg\| \frac{w(n, \tau)} {\jb{n_2}^{\frac{1}{2}+s -} \jb{\tau - n^3 + 3 n n_1 n_2}^{1-\frac{1}{p'}}} \bigg\|_{L^{p'}_{n_1} }.\]

\noi
Thus, it remains to show that $M < \infty.$
By \eqref{wbound1}, \eqref{Paway1}, and Lemma \ref{LEM:Psum}, we have 
\begin{align*}
M^{p'} \lesssim 
\sup_{n, \tau} \frac{1}{\jb{n}^{p' -1-} }
\sum_{n_2}  \frac{1} {\jb{n_2}^{(\frac{1}{2}+s -)p'} \jb{\ld + n_1(n-n_1)}^{p'-1-}}
< \infty,
\end{align*}

\noi
since $(\frac{1}{2}+s -)p' + 2(p'-1)- > 1$ for $p = 2+ < 4$ and $sp = -1-$.
 
\medskip

Now, assume $\MAX = \jb{\tau_2 - n_2^3}$. 
 By symmetry, this takes care of the case when $\MAX = \jb{\tau_1 - n_1^3}$.
  Note that we have $w(n, \tau) \lesssim \jb{\tau - n^3}^{0+}$ by a crude estimate.
 Thus,  by duality, it suffices to show
\begin{align} \label{DUAL1}
&\sum_{l = 0}^\infty \bigg\| \sum_n 
 \frac{|n|\jb{n}^s}{\jb{n_1}^s\jb{n_2}^s} \frac{1}{w(n_2, \tau_2)\jb{\tau_2 - n_2^3}^\frac{1}{2}}
\int  \frac{f(n_1, \tau_1)   h(n, \tau) d\tau }{\jb{\tau_1 - n_1^3}^\frac{1}{2}\jb{\tau - n^3}^{\frac{1}{2}-}}
 \bigg\|_{L^{p'}_{|n_2| \sim 2^l} L^{p'}_{\tau_2}}\\
 \lesssim & \sup_k \|f\|_{L^{p}_{|n_1| \sim 2^k} L^{p}_{\tau_1}}
\sum_{j=0}^\infty \|h\|_{L^{p'}_{|n| \sim 2^j} L^{p'}_{\tau}} . \notag
\end{align}

For fixed $n_2\ne0$ and $\tau_2$, 
let $\ld = \frac{\tau_2 - n_2^3}{3n_2}$ and define
\begin{align*}
\wt{B}_{n_2, \tau_2} = \{ n & \in \mathbb{Z}: |n - r_j| \geq 1, j = 1, 2\\
& r_j \ \text{ is a real root of } P_{n_2, \ld}(n) := \ld + n (n_2 - n) \\
& \text{or } r_j = \frac{1}{2}n_2 \ \text{ if no real root} \}.
\end{align*}

\noi

\noi
On $\wt{B}_{n_2, \tau_2}$, we have
\begin{equation} \label{away2}
\jb{\tau_2 - n_2^3 - 3 n n_1 n_2} \gtrsim \jb{n_2} \jb{\ld + n(n_2-n)}. 
\end{equation}

\noi
Also, note that 
$w(n_2, \tau_2) \lesssim \min(\jb{n}^\dl, \jb{n_1}^\dl)$ 
on $\wt{B}^c_{n_2, \tau_2}$.

\noi 
$\bullet$ {\bf Case (2):} $\MAX = \jb{\tau_2 - n_2^3}$ and $|n_1| \gtrsim |n_2|$. 
In this case, we have
\begin{equation} \label{upperbd2}
\frac{|n|\jb{n}^s}{\jb{n_1}^s\jb{n_2}^s} \frac{1}{\MAX^\frac{1}{2}}
\lesssim \frac{1}{\jb{n_2}^{\frac{1}{2} + s}}.
\end{equation}

\noi
$\circ$ Subcase (2.a): On $\wt{B}^c_{n_2, \tau_2}$.

First, suppose $\jb{\tau_2 - n_2^3 - 3n n_1 n_2} \gtrsim \jb{n_2}^\frac{1}{100}$.
 Thus, by Lemma \ref{LEM:GTV},  we have
 \begin{equation} \label{GTV2a} \|\jb{\tau_1 - n_1^3}^{-\frac{1}{2}+\alpha} \jb{\tau - n^3}^{-\frac{1}{2}+} \|_{L^p_\tau}
 \lesssim     \jb{\tau_2 - n_2^3 - 3n n_1 n_2}^{-\frac{1}{2} + \al+} 
 \lesssim \jb{n_2}^{\frac{-1}{100}(\frac{1}{2} - \al)+}
 \end{equation}

\noindent
for $\alpha > 0$.
Then,  by H\"older inequality in $\tau$ followed 
by Young and H\"older inequalities, we have
\begin{align*}
\bigg\| \int  &  \frac{f(n_1, \tau_1)   h(n, \tau) d\tau }{\jb{\tau_1 - n_1^3}^\frac{1}{2}\jb{\tau - n^3}^{\frac{1}{2}-}}
 \bigg\|_{ L^{p'}_{\tau_2}}
  \lesssim \jb{n_2}^{\frac{-1}{100}(\frac{1}{2} - \al)+} 
 \Big\|\frac{f(n_1, \tau_1)}{\jb{\tau_1 - n_1^3}^\alpha}   h(n, \tau) \Big\|_{ L^{p'}_{\tau_2,  \tau}} \\
& \leq  \jb{n_2}^{\frac{-1}{100}(\frac{1}{2} - \al)+} 
\| \jb{\tau_1 - n_1^3}^{-\alpha}\|_{L^\frac{p}{p-2}_{\tau_1}} \|f(n_1, \cdot)\|_{L^{p}_{\tau_1}}    
\| h(n, \cdot) \|_{ L^{p'}_{  \tau}} 
\end{align*}

\noi
for fixed $n$ and $n_1$.
By choosing $\al > \frac{p-2}{p} = 0+$, we have 
$\| \jb{\tau_1 - n_1^3}^{-\alpha}\|_{L^\frac{p}{p-2}_{\tau_1}} < C< \infty$, independently of $n_1$.

Note that if $|n| \sim 2^j$ and $|n_2| \sim 2^l$, 
then we have $|n_1| \sim 2^k$ where $|k - j| \leq 5$ or $|k - l| \leq 5$
since $n = n_1 + n_2$ and $|n_1| \geq |n_2|$.
As in Subcase (1.a), 
for fixed $n_2$ and $\tau_2$ there are at most four values of $n \in \wt{B}^c_{n_2, \tau_2}$.
i.e. the summation over $n$ can be replace by the $L^{p'}_n$ norm.
By H\"older inequality in $n_2$ after switching the order of summations, 
\begin{align} \label{case2a}
\text{LHS of \eqref{DUAL1} }
& \lesssim 
\sum_{l = 0}^\infty \big\| 
 \jb{n_2}^{-\frac{1}{2} -s  - \frac{1}{100}(\frac{1}{2} - \al)+} 
\|f(n_1, \cdot)\|_{L^{p}_{\tau_1}}    
\| h(n, \cdot) \|_{ L^{p'}_{ \tau}} 
 \big\|_{L^{p'}_{|n_2| \sim 2^l} L^{p'}_{n}}    \notag \\
& \lesssim \Big(\sum_{l = 0}^\infty (2^l)^{0-}\Big)
\sup_{l } \bigg( \sum_{j = 0 }^\infty \sum_{|n|\sim 2^j}
 \|\jb{n_2}^{-\frac{1}{2} -s - \frac{1}{100}(\frac{1}{2} - \al)+} \|^{p'}_{L^\frac{p}{p-2}_{n_2}} \\
 & \hphantom{XXXXX}\times
\|f(n -n_2, \cdot)\|^{p'}_{L^p_{|n_2|\sim 2^l} L^{p}_{\tau_1}}    
\| h(n, \cdot) \|^{p'}_{ L^{p'}_{ \tau}} 
 \bigg)^\frac{1}{p'}   \notag \\
&  \lesssim 
 \wt{M} \|f\|_{b^0_{p, \infty}L^p_\tau}\|h\|_{b^0_{p', 1}L^{p'}_\tau}, \notag
 \end{align}

\noi
where $\wt{M}=\|\jb{n_2}^{-\frac{1}{2} -s - \frac{1}{100}(\frac{1}{2} - \al)+} \|_{L^\frac{p}{p-2}_{n_2}}
< \infty$, since  
$\big(\frac{1}{2} +s + \frac{1}{100}(\frac{1}{2} - \al)-\big) \frac{p}{p-2} > 1$
for $p < \frac{2-}{1-\frac{1}{100}+} \sim \frac{200-}{99} $ with $sp < -1$.
Note that we did not make use of $w(n_2, \tau_2)$ in this case.

\medskip

Now, 
suppose $\jb{\tau_2 - n_2^3 - 3n n_1 n_2} \ll \jb{n_2}^\frac{1}{100}$.
In this case, we can not expect any contribution from $\jb{\tau_2 - n_2^3 - 3n n_1 n_2}$ in \eqref{GTV2a}.
However, as long as we gain a small power of $\jb{n_2}$ in the denominator of LHS of \eqref{DUAL1}, 
we can proceed as before.
 Note that  $w(n_2, \tau_2) \sim \jb{n}^\dl$ since $|n_1| \gtrsim |n_2|$
 implies $|n| \lesssim |n_1|$.
If $|n_2| \lesssim |n|^{100}$, then we have $w(n_2, \tau_2) \gtrsim \jb{n_2}^\frac{\dl}{100}$.
Otherwise, we have $|n_1| \gtrsim |n_2| \gg |n|^{100}$.
Then, instead of \eqref{upperbd2}, we have
\begin{equation*} 
\frac{|n|\jb{n}^s}{\jb{n_1}^s\jb{n_2}^s} \frac{1}{\MAX^\frac{1}{2}}
\lesssim \frac{1}{\jb{n_1}^{(\frac{1}{2} + s)\frac{99}{100}} \jb{n_2}^{\frac{1}{2} + s}}
\lesssim \frac{1}{ \jb{n_2}^{\frac{1}{2} + s + \eps} }
\end{equation*}

\noi
for some $\eps = 0+$.
Hence, we obtain a small power of $\jb{n_2}$ in either case.

\smallskip

\noi
$\circ$ Subcase (2.b): On $\wt{B}_{n_2, \tau_2}$. \
In this case, we have \eqref{away2}.
As in Subcase (2.a), choose small $\al > \frac{p-2}{p} = 0+$.
By H\"older inequality with \eqref{GTV2a} and \eqref{away2},
we have
\begin{align*}
 \int  &  \frac{f(n_1, \tau_1)   h(n, \tau) d\tau }{\jb{\tau_1 - n_1^3}^\frac{1}{2}\jb{\tau - n^3}^{\frac{1}{2}-}}
   \lesssim 
   \jb{n_2}^{-\frac{1}{2} + \al+} 
   \jb{\ld + n(n_2-n)}^{-\frac{1}{2} + \al+} 
 \Big\|\frac{f(n_1, \tau_1)}{\jb{\tau_1 - n_1^3}^\alpha}   h(n, \tau) \Big\|_{ L^{p'}_{  \tau}} 
\end{align*}

\noi
for fixed $n$, $n_2$, and $\tau_2$ with $\ld = \frac{\tau_2 - n_2^3}{3n_2}$.
Now by \eqref{upperbd2} and H\"older inequality in $n$ and then in $\tau_1$, we have
\begin{align*}
\text{LHS of \eqref{DUAL1} } 
& \lesssim \sum_{l=0}^\infty (2^l)^{0-}
\wt{M}_1 
\bigg\| \jb{n_2}^{-1 + \al - s +}
\Big\|\frac{f(n_1, \tau_1)}{\jb{\tau_1 - n_1^3}^\alpha}   h(n, \tau) \Big\|_{ L^{p'}_{ \tau_2, \tau}} 
\bigg\|_{L^{p'}_{|n_2| \sim 2^l} L^{p'}_n}
\\
& \lesssim \sup_{l} 
\wt{M}_1 
\big\| \jb{n_2}^{-1 + \al - s +}
\|\jb{\tau_1 - n_1^3}^{-\alpha} \|_{L^\frac{p}{p-2}_{\tau_1}} 
\|f(n_1, \cdot)\|_{L^p_{\tau_1}}
\|h(n, \cdot) \|_{ L^{p'}_{  \tau}} 
\big\|_{L^{p'}_{|n_2| \sim 2^l} L^{p'}_n},
\end{align*}

\noi
where
$\wt{M}_1 = \sup_{n_2, \tau_2} \|  \jb{\ld + n(n_2-n)}^{-\frac{1}{2} + \al+}  \|_{L^p_n}
< \infty$ in view of Lemma \ref{LEM:Psum} since $2\cdot (\frac{1}{2} -\al -) p > 1$.
We also have 
$\|\jb{\tau_1 - n_1^3}^{-\alpha} \|_{L^\frac{p}{p-2}_{\tau_1}} <C <\infty$,
independently of $n_1$ as before.

Note that if $|n| \sim 2^j$ and $|n_2| \sim 2^l$, 
then we have $|n_1| \sim 2^k$ where $|k - j| \leq 5$ or $|k - l| \leq 5$
since $n = n_1 + n_2$ and $|n_1| \gtrsim |n_2|$.
Then, by H\"older inequality in $n_2$, we have
\begin{align*}
\text{LHS of \eqref{DUAL1} } 
& \lesssim \wt{M}_2 \sup_{l} 
 \bigg( \sum_{j = 0}^\infty \sum_{|n|\sim 2^j}
\|f(n_1, \cdot)\|^{p'}_{L^p_{|n_1|\sim 2^k}L^p_{\tau_1}}
\|h(n, \cdot) \|^{p'}_{ L^{p'}_{  \tau}} 
\bigg)^\frac{1}{p'}\\
&  \lesssim 
 \wt{M}_2 \|f\|_{b^0_{p, \infty}L^p_\tau}\|h\|_{b^0_{p', 1}L^{p'}_\tau}, 
\end{align*}

\noi
where
$\wt{M}_2
=\|\jb{n_2}^{ -1 + \al - s +} \|_{L^\frac{p}{p-2}_{n_2}} < \infty$
since $ (1 - \al + s-)\frac{p}{p-2} > 1$. 

\medskip 

\noi 
$\bullet$ {\bf Case (3):} $\MAX = \jb{\tau_2 - n_2^3}$ and $|n_1| \ll |n_2|$. 
 $ \LRA|n_1| \ll |n_2| \sim |n|$.

In this case, we have
\begin{equation} \label{upperbd3}
\frac{|n|\jb{n}^s}{\jb{n_1}^s\jb{n_2}^s} \frac{1}{\MAX^\frac{1}{2}}
\lesssim \frac{1}{\jb{n_1}^{\frac{1}{2} + s}}.
\end{equation}

\noi

\noi
$\circ$ Subcase (3.a): On $\wt{B}^c_{n_2, \tau_2}$.

If $\jb{\tau_2 - n_2^3 - 3n n_1 n_2} \gtrsim \jb{n_2}^\frac{1}{100}$,
then we have 
$\jb{\tau_2 - n_2^3 - 3n n_1 n_2} \gg \jb{n_1}^\frac{1}{100}$.
By repeating the computation in Subcase (2.a),
 we now have a small negative power of $\jb{n_1} = \jb{n - n_2}$ in \eqref{case2a},
instead of $\jb{ n_2}$, which is still summable in $L^\frac{p}{p-2}_{n_2}$ for each fixed $n$.
Note that if $|n_2| \sim 2^l$, then we have $|n_1| \sim 2^k$  and $|n| \sim 2^j $ 
where $ k = 0, \cdots, l$ and $|j - l | \leq 5$.
Then, it suffice to see 
\begin{align} \label{modification1}
\sum_{l = 0}^\infty \| \jb{n_1}^{0-} F(n, n_2)\|_{L^{p'}_{|n_2| \sim 2^l} L^{p'}_{n}}
& \lesssim \sum_{j = 0}^\infty \sum_{|j - l| \leq 5} \sum_{k = 0}^l (2^k)^{0-}  
\|F(n, n-n_1)\|_{L^{p'}_{|n| \sim 2^j} L^{p'}_{|n_1| \sim 2^k} } \notag \\
& \lesssim \sum_{j = 0}^\infty 
\sup_k    \|F(n, n-n_1)\|_{L^{p'}_{|n| \sim 2^j} L^{p'}_{|n_1| \sim 2^k} }.
\end{align}

Now, suppose $\jb{\tau_2 - n_2^3 - 3n n_1 n_2} \ll \jb{n_2}^\frac{1}{100}$.
Then, we have $w(n_2, \tau_2) \sim \jb{n_1}^\dl$ since $|n_1| \ll  |n|$.
This extra gain of $\jb{n_1}^\dl$ in the denominator of \eqref{DUAL1}
lets us proceed as before.

\medskip

\noi
$\circ$ Subcase (3.b): On $\wt{B}_{n_2, \tau_2}$.
In this case, we have \eqref{away2}
and we can basically proceed as in Subcase (2.b)
 with \eqref{upperbd3} in place of \eqref{upperbd2}.
Using \eqref{modification1}, the modification is straightforward and we omit the details.

\medskip

\noi
$\bullet$ {\bf PART 2:}
Next, we consider the $\ft{Y}^{0, -1}_p$ part 
of the $\ft{W}^{0, -\frac{1}{2}}_p$ norm on the left hand side of \eqref{Wbilinear}.
Define the bilinear operator
$\mathcal{B}_{\theta, b}(\cdot, \cdot) $
by
\begin{align*}
\mathcal{B}_{\theta, b}(f, g)(n, \tau) 
= \frac{1}{2\pi}\sum_{n= n_1 + n_2} 
\frac{1}{\jb{\tau-n^3}^\theta} \intt_{\tau = \tau_1+\tau_2}
\frac{|n|\jb{n}^s}{\jb{n_1}^s\jb{n_2}^s}
\frac{f(n_1, \tau_1) g(n_2, \tau_2) d \tau_1}{ \prod_{j = 1}^2 w(n_j, \tau_j) \jb{\tau_j -n_j^3}^b}.
\end{align*}

If $\MAX = \jb{\tau_1-n_1^3}$ or $\jb{\tau_2-n_2^3}$, 
then by H\"older inequality, we have
\begin{align*}
\text{LHS of \eqref{Wbilinear} }
& = \sup_j \|\mathcal{B}_{-1, \frac{1}{2}}(f, g)(n, \tau) \|_{L^p_{|n|\sim 2^j} L^1_\tau}\\
& \leq
\sup_j \big\|\| \jb{\tau-n^3}^{-\frac{1}{2}-\eps}\|_{L^{p'}_\tau}
\|\mathcal{B}_{-\frac{1}{2}+\eps, \frac{1}{2}}(f, g)(n, \tau)\|_{L^p_\tau} \big\|_{L^p_{|n|\sim 2^j} }\\
& \lesssim
\sup_j \| 
\mathcal{B}_{-\frac{1}{2}+\eps, \frac{1}{2}}(f, g)(n, \tau)\|_{L^p_{|n|\sim 2^j}L^p_\tau },
\end{align*}

\noi 
where we choose $\eps>0$ such that $(\frac{1}{2} +\eps )p' >1$.
For $p = 2+$, we can take $\eps = 0+$.
Then, the proof reduces to Cases (2) and (3), 
where $\jb{\tau-n^3}^\frac{1}{2}$
is replaced by $\jb{\tau-n^3}^{\frac{1}{2}-\eps}$.
Note that this does not affect the argument in Cases (2) and (3).

Now, assume  $\MAX = \jb{\tau-n^3}$.
If $\max(\jb{\tau_1-n_1^3}, \jb{\tau_2-n_2^3}) \gtrsim \jb{\tau-n^3}^\frac{1}{100}$, 
then by H\"older inequality, we have
\begin{align*}
\text{LHS of \eqref{Wbilinear} }
& \leq
\sup_j \big\|\| \jb{\tau-n^3}^{-\frac{1}{2}-\eps}\|_{L^{p'}_\tau}
\|\mathcal{B}_{-\frac{1}{2}, \frac{1}{2}-100\eps}(f, g)(n, \tau)\|_{L^p_\tau} \big\|_{L^p_{|n|\sim 2^j} }\\
& \lesssim
\sup_j \| 
\mathcal{B}_{-\frac{1}{2}, \frac{1}{2}-100\eps}(f, g)(n, \tau)\|_{L^p_{|n|\sim 2^j}L^p_\tau }.
\end{align*}

\noi
Then, the proof reduces to Case (1) 
with $\jb{\tau_j-n_j^3}^\frac{1}{2}$ replaced by $\jb{\tau_j-n_j^3}^{\frac{1}{2}-100\eps}$,
which does not affect the argument.

Lastly, 
if $\max(\jb{\tau_1-n_1^3}, \jb{\tau_2-n_2^3}) \ll \jb{\tau-n^3}^\frac{1}{100}$, 
then by H\"older inequality, we have
\begin{align*}
\text{LHS of \eqref{Wbilinear} }
& \leq
\sup_j \big\|\| \jb{\tau-n^3}^{-\frac{1}{2}} \chi_{\Omega(n)}(\tau-n^3)\|_{L^{p'}_\tau}
\|\mathcal{B}_{-\frac{1}{2}, \frac{1}{2}}(f, g)(n, \tau)\|_{L^p_\tau} \big\|_{L^p_{|n|\sim 2^j} }\\
& \lesssim
\sup_j \| 
\mathcal{B}_{-\frac{1}{2}, \frac{1}{2}}(f, g)(n, \tau)\|_{L^p_{|n|\sim 2^j}L^p_\tau },
\end{align*}

\noi
where the second inequality follows from Lemma \ref{LEM:closetocurve} 
since $-\frac{1}{2} p' = -1 + < -\frac{3}{4}$.
Once again, the proof reduces to Case (1). 

\noi
$\bullet$ {\bf PART 3:}
In this last part, we discuss how to gain a small power of $T$ in 
\eqref{Xbilinear1} by assuming that $u$ or $v$ are supported locally in time.
In Part 1 and 2, we indeed showed
\begin{equation} \label{Xbilinear3}
\|  \dx (uv) \|_{W^{s, -\frac{1}{2}}_p} \lesssim 
 \|  \ft{u}\|_{\ft{X}^{s, b}_p}
\| w \ft{v}\|_{\ft{X}^{s, \frac{1}{2}}_p}
 + \|  w \ft{u}\|_{\ft{X}^{s, \frac{1}{2}}_p}
\| \ft{v} \|_{\ft{X}^{s, b}_p}
\end{equation}

\noi
for some $b \in(0,  \frac{1}{2})$
since we needed the full power of $\frac{1}{2}$ from only one of 
$\jb{\tau - n^3}$, $\jb{\tau_1 - n_1^3}$, or $\jb{\tau_2 - n_2^3}$, i.e. from the maximum one,
and the weight $w(n_j, \tau_j)$ was needed only when $\MAX = \jb{\tau_j - n_j^3}$.
Thus, \eqref{Xbilinear1}
follows once we prove 
\begin{equation} \label{CCtime0}
 \|\eta_{_{2T}} u\|_{X^{s, b}_p} 
 \lesssim T^\theta \| u\|_{X^{s, \frac{1}{2} }_p} 
\end{equation}

\noi
for some $\theta > 0$.
By interpolation, we have 
\begin{equation} \label{CCtime1}
 \| u\|_{X^{s, b}_p} 
 \lesssim  \| u\|^\alpha_{X^{s, 0 }_p}\| u\|^{1-\alpha}_{X^{s, \frac{1}{2} }_p}, 
 \end{equation}

\noi
where $\alpha = 1-2b \in(0, 1)$.
Recall $\ft{\eta_{_{2T}}}(\tau) = 2T \ft{\eta}(2T\tau)$.
Hence, we have $\|\ft{\eta_{_{2T}}}\|_{L^q_\tau} \sim T^\frac{q-1}{q} \|\ft{\eta}\|_{L^q_\tau} \sim T^\frac{q-1}{q}$.
i.e.  we can gain a positive power of $T$ as long as $q>1$.
For fixed $n$, by Young and H\"older inequalities, we have
\begin{align*}
\|  \ft{\eta_{_{2T}}} * \ft{u}(n, \cdot) \|_{L^p_\tau}
\leq \|  \ft{\eta_{_{2T}}} \|_{L^{p'}_\tau} \|\ft{u}(n, \cdot) \|_{L^{\frac{p}{2}}_\tau}
\lesssim T^{\frac{p'-1}{p'}} \|\jb{\tau-n^3}^{-\frac{1}{2}} \|_{L^p_\tau}
\|\jb{\tau-n^3}^{\frac{1}{2}}\ft{u}(n, \cdot) \|_{L^p_\tau}
\end{align*}

\noi
Hence, for $p > 2$, we have
\begin{equation} \label{CCtime2}
\| u\|_{X^{s, 0 }_p} \lesssim T^{\frac{1}{p}} \| u\|_{X^{s, \frac{1}{2} }_p}.
\end{equation}

\noi
Then, \eqref{CCtime0} follows from \eqref{CCtime1} and \eqref{CCtime2}. 
This completes the proof.
\end{proof}

\begin{remark} \label{REM:FLP}\rm
A simple modification of the proof of Proposition \ref{PROP:Wbilinear1}
can be used to establish the local well-posedness of \eqref{KDV} in $\mathcal{F} L^{s, p}= \ft{b}^s_{p, p}$
for some $p = 2+$, $s = -\frac{1}{2}+$ with $sp < - 1$ as well.
Such local solutions can be extended globally a.s. on the statistical ensemble
from the discussion in Section 3.
The modification is straightforward and we omit the details.
\end{remark}

\smallskip

\noindent
{\bf Acknowledgements:} 
The author would like to thank Prof. Luc Rey-Bellet for mentioning the work of Gross \cite{GROSS}
and Kuo \cite{KUO}.


\begin{thebibliography}{99}



\bibitem{BTAO} I. Bejenaru, T. Tao, {\it Sharp well-posedness and ill-posedness results for a quadratic non-linear 
Schr\"odinger equation,} J. Funct. Anal. 233 (2006), 228--259.

\bibitem{BO1} J. Bourgain, {\it Fourier transform restriction phenomena for certain lattice subsets and applications to 
nonlinear evolution equations II}, GAFA., 3 (1993), 209--262.


\bibitem{BO3} J. Bourgain, {\it Periodic Korteweg-de Vries equation with measures as initial data,}
Sel. Math., New Ser. 3 (1997) 115--159.


\bibitem{BO4} J. Bourgain, {\it Periodic nonlinear Schr\"odinger equation and invariant measures, }
Comm. Math. Phys. 166 (1994), 1--26.

\bibitem{BO5} J. Bourgain, {\it On the Cauchy and invariant measure problem for the periodic Zakharov system,}
Duke Math. J. 76 (1994), 175--202.

\bibitem{BT1} N. Burq,  N. Tzvetkov, {\it Invariant measure for a three dimensional nonlinear wave equation,}
Int. Math. Res. Not. (2007), no. 22, Art. ID rnm108, 26pp.

\bibitem{CM} S. Cambronero, H. P. McKean, {\it The ground state eigenvalue of Hill's equation with white noise potential,}
Comm. Pure Appl. Math. 52 (1999) no10, 1277--1294.


\bibitem{CCT} M. Christ, J. Colliander, T. Tao, {\it Asymptotics, frequency modulation, and low-regularity 
illposedness of canonical defocusing equations,}  Amer. J. Math.  125  (2003),  no. 6, 1235--1293.


\bibitem{CKSTT4}J. Colliander, M. Keel, G. Staffilani, H. Takaoka, T. Tao, {\it Sharp Global Well-Posedness for KdV and Modified KdV
on $\mathbb{R}$ and $\mathbb{T}$,}
J. Amer. Math. Soc. 16 (2003), no. 3, 705--749.


\bibitem{FER} M.X. Fernique, {\it Int\'egrabilit\'e des Vecteurs Gaussiens,}
Academie des Sciences, Paris, Comptes Rendus, 270, S\'eries A (1970), 1698--1699.




\bibitem{GTV} J. Ginibre, Y. Tsutsumi,  G. Velo, {\it On the Cauchy Problem for the Zakharov System, }
J. Funct. Anal., 151 (1997), 384--436.

\bibitem{GROSS} L. Gross, {\it Abstract Wiener spaces,}
Proc. 5th Berkeley Sym. Math. Stat. Prob. 2 (1965), 31--42.


\bibitem{KT} T. Kappeler and P. Topalov, {\it Global wellposedness of KdV in $H\sp {-1}(\mathbb T,\mathbb R)$,} Duke Math. J. 135 (2006), no. 2, 327--360. 



\bibitem{KPV4} C. Kenig, G. Ponce, and L. Vega, {\it A bilinear estimate with applications to the KdV equation,}
J. Amer. Math. Soc. 9 (1996), no. 2 573--603.

\bibitem{KPV5} C. Kenig, G. Ponce,  L. Vega, {\it On the ill-posedness of some canonical dispersive equations,}
Duke Math. J. 106 (2001), no.3, 617--633.

\bibitem{KPV7} C. Kenig, G. Ponce, and L. Vega, {\it Quadratic forms for the 1-D semilinear Schr\"odinger equation,}
Trans. Amer. Math. Soc., 348 (1996), 3323--3353. 

\bibitem{KISHI} N. Kishimoto, {\it Local well-posedness for the Cauchy problem of the quadratic Schr\"odinger equation with nonlinearity $\cj{u}^2$,} to appear in Comm. Pure Appl. Anal.

\bibitem{KUO} H. Kuo, {\it Gaussian Measures in Banach Spaces,}
Lec. Notes in Math. 463,  Springer-Verlag, New York, 1975.




\bibitem{OH3} T. Oh, {\it Invariant Gibbs measures and a.s. global well-posedness for coupled KdV systems,}
to appear in Diff. Integ. Equations.

\bibitem{OHSBO} T. Oh, {\it Invariance of the Gibbs Measure for the Schr\"odinger-Benjamin-Ono System,}
preprint.

\bibitem{QV} J. Quastel, B. Valk\'o, {\it KdV preserves white noise,}
Comm. Math. Phys. 277 (2008), no. 3, 707--714.

\bibitem{ROY} B. Roynette, {\it Mouvement brownien et espaces de Besov,}
Stochastics Stochastics Rep. 43 (1993), 221--260.

\bibitem{TZ1} N. Tzvetkov, {\it Invariant measures for the nonlinear Schr\"odinger equation on the disc,}
Dyn. Partial Differ. Equ. 3 (2006), no. 2, 111--160.

\bibitem{TZ2} N. Tzvetkov, {\it Invariant measures for the defocusing Nonlinear Schr\"odinger equation
(Mesures invariantes pour l'\'equation de Schr\"odinger non lin\'eaire),}
Annales de l'Institut Fourier, 58 (2008), 2543--2604.


\bibitem{Z} P. Zhidkov, {\it Korteweg-de Vries and Nonlinear Schr\"odinger Equations: Qualitative Theory,}
Lec. Notes in Math. 1756, Springer-Verlag, 2001.


\end{thebibliography}
\end{document}